\newtheorem{theorem}{Theorem}[section]
\newtheorem{definition}[theorem]{Definition}
\newtheorem{lemma}[theorem]{Lemma}
\newtheorem{corollary}[theorem]{Corollary}
\newtheorem{proposition}[theorem]{Proposition}
\newtheorem{example}[theorem]{Example}
\newtheorem{notation}[theorem]{Notation}
\newcommand\Worms{{\mathbb W}}
\newcommand{\glp}{{\ensuremath{\mathsf{GLP}}}\xspace}
\newcommand{\pair}[1]{{\ensuremath{\langle #1 \rangle}}\xspace}
\newcommand{\PRA}{\ensuremath{{\mathrm{PRA}}}\xspace}
\newcommand{\pa}{\ensuremath{{\mathrm{PA}}}\xspace}
\newcommand{\Robinson}{\ensuremath{{\mathrm{Q}}}\xspace}
\newcommand{\isig}[1]{{\ensuremath {\mathrm{I}\Sigma_{#1}}}\xspace}
\newcommand{\ea}{\ensuremath{{\rm{EA}}}\xspace}
\newcommand{\la}{\langle}
\newcommand{\ra}{\rangle}
\def\le{{\ell}}
\def\fmodels{\xymatrix{
\ar@{|=}[r]^{<\omega}&
}
}
\def\nmodels{\xymatrix{
\ar@{|=}[r]^{N}&
}
}
\def\<{\left <}
\def\>{\right >}
\DeclareSymbolFont{AMSb}{U}{msb}{m}{n}
\DeclareMathSymbol{\N}{\mathbin}{AMSb}{"4E}
\DeclareMathSymbol{\Z}{\mathbin}{AMSb}{"5A}
\DeclareMathSymbol{\R}{\mathbin}{AMSb}{"52}
\DeclareMathSymbol{\Q}{\mathbin}{AMSb}{"51}
\DeclareMathSymbol{\I}{\mathbin}{AMSb}{"49}
\DeclareMathSymbol{\C}{\mathbin}{AMSb}{"43}
\newcommand{\bt}{\begin{theorem}}
\newcommand{\et}{\end{theorem}}
\newcommand{\bl}{\begin{lemma}}
\newcommand{\el}{\end{lemma}}
\def\worms{{\mathbb W}}
\newcommand\remove[1]{}
\begin{document}

\title{Turing-Taylor expansions for arithmetic theories}
\author{Joost J. Joosten}

\maketitle

\begin{abstract}
Turing progressions have been often used to measure the proof-theoretic strength of mathematical theories: iterate adding consistency of some weak base theory until you ``hit" the target theory. Turing progressions based on $n$-provability give rise to a $\Pi_{n+1}$ proof-theoretic ordinal $|U|_{\Pi^0_{n+1}}$. As such, to each theory $U$ we can assign the sequence of corresponding $\Pi_{n+1}$ ordinals $\la |U|_n\ra_{n>0}$. We call this sequence a \emph{Turing-Taylor expansion} or \emph{spectrum} of a theory. 

In this paper, we relate Turing-Taylor expansions of sub-theories of Peano Arithmetic to Ignatiev's universal model for the closed fragment of the polymodal provability logic $\glp_\omega$. In particular, we observe that each point in the Ignatiev model can be seen as Turing-Taylor expansions of formal mathematical theories.

Moreover, each sub-theory of Peano Arithmetic that allows for a Turing-Taylor expansion will define a unique point in Ignatiev's model.
\end{abstract}

\section{Introduction}
Alan Turing considered in his dissertation progressions that are based on transfinitely adding consistency statements (\cite{Turing:1939:TuringProgressions}). If we disregard for the moment subtle coding and representation issues, these Turing progressions starting with some base theory $T$ were defined by
\[
\begin{array}{llll}
T^0 &:=& T;  \\
T^{\alpha +1} & :=& T^\alpha \cup \{ \, {\tt Con}({T^\alpha}) \, \}; & \\
T_\lambda & := & \bigcup_{\alpha < \lambda} T^\alpha & \mbox{for limit $\lambda$.} 
\end{array}
\]
Here, ${\tt Con}({T^\alpha})$ denotes some natural formalization of the statement that the theory ${T^\alpha}$ cannot derive, say, $0=1$. If one starts out with a sound base theory $T$ this gives rise to a progression of increasing proof-theoretic strength. Since the consistency statements are of logical complexity $\Pi^0_1$, Turing progressions can be used to define a $\Pi^0_1$ ordinal of a theory that contains (interprets) arithmetic; one starts out with a relatively weak theory $T$ and defines the $\Pi^0_1$ ordinal of some target theory $U$ by 
\[
|U|_{\Pi^0_1} \ := \ \sup \{ \alpha \mid T^\alpha \subseteq U \}.
\]
Using stronger notions of provability this can be generalized. We shall use $[n]_T$ to denote a formalization of ``provable in $T$ together with all true $\Pi^0_n$ sentences" and $\la n \ra_T$ will denote the dual consistency notion $\neg [n] \neg$. Generalized Turing progressions are readily defined:

\[
\begin{array}{llll}
T^0_n &:=& T;  \\
T^{\alpha +1}_n & :=& T^\alpha_n \cup \{ \la n \ra_{T^\alpha_n} \top\}; & \\
T_n^\lambda & := & \bigcup_{\alpha < \lambda} T_n^\alpha & \mbox{for limit $\lambda$.} 
\end{array}
\]
Here, the $\top$ stands for some fixed provable like for example $1=1$ so that $\la n \ra_{T^\alpha_n} \top$ simply says that the theory $T^\alpha_n$ is consistent with all true $\Pi_n$ formulas. We can now define the $\Pi^0_{n+1}$ proof-theoretical ordinal of a theory $U$ w.r.t.\  some base theory $T$:
\[
|U|_{\Pi^0_n} \ := \ \sup \{ \alpha \mid T_n^\alpha \subseteq U \}.
\]
Using Primitive Recursive Arithmetic as base theory, U. Schmerl proved in \cite{Schmerl:1978:FineStructure} that $|\pa|_{\Pi^0_n} = \varepsilon_0$ for all $n\in \omega$ and Beklemishev showed (\cite{Beklemishev:2003:ProofTheoreticAnalysisByIteratedReflection, Beklemishev:2004:ProvabilityAlgebrasAndOrdinals, Beklemishev:2005:Survey}) how provability logics can naturally be employed to perform and simplify the computations to obtain these ordinals.

In this paper we shall see how various theories can be written as the finite union of Turing progressions in a way reminiscent of how $\mathcal{\mathbf C}^\infty$ functions can be written as a countable sum of monomials in their Taylor expansion. Hence, we shall speak of \emph{Turing-Taylor} expansions of arithmetical theories. Whereas the monomials in a Turing expansion of a $\mathcal{\mathbf C}^\infty$ function are in a sense orthogonal, the monomials in our Turing-Taylor expansions are not. Therefore, we will sometimes call the Turing-Taylor expansions also \emph{ordinal spectra} or simply \emph{spectra} of theories.

\section{Arithmetical preliminaries}
We need to formalize various arguments that use cut-elimination. To this end, we assume that the base theory proves $\sf supexp$, i.e.~the totality of the super-exponential function $x\mapsto 2^x_x$, where $2^x_0 := x$ and $2^x_{y+1}:= 2^{2^x_y}$. However, we also need that our base theories are of low logical complexity aka, that the axioms are of logical complexity at most $\Pi^0_1$. 

To this end, we shall assume that any theory $T$ will be in a language that contains a function symbol for the super-exponentiation and that the recursive defining equations for this super-exponentiation are amongst the axioms of $T$. 

After having fixed our language, we define the arithmetical hierarchy syntactically as usual: $\Delta_0$ formulas are those formulas that only employ bounded quantification (i.e., quantification of the form $\forall \, x{<}t$ where $t$ is some term not containing $x$); If $\phi \in \Pi_n$ ($\Sigma_n$ resp.), then $\exists \, \vec x \ \phi \in \Sigma_{n+1}$ ($\forall \vec x \phi \in \Pi_{n+1}$ resp.).

Since $T$ has a constant for super-exponentiation, $T$ will be able to \emph{prove} the totality of super-exponentiation in a trivial way using induction for $\Delta_0$ formulas. It is folklore that $\Delta_0$ induction can be axiomatized in a $\Pi_1$ fashion:

\begin{lemma}
Over Robinson's arithmetic \Robinson the following two schemes are equivalent
\begin{enumerate}
\item\label{item:Pi2FormulationInduction} 
$\forall x\ (\forall\, y{<}x \, \phi(y) \to \phi(x)) \to \forall x \ \phi(x)$ for $\phi \in \Delta_0$;

\item\label{item:Pi1FormulationInduction}
$\forall x\ \Big(\forall\, z{\leq} x\ \big[\forall\, y{<}z\ \phi (y) \to \phi(z)\big]\to \phi (x)\Big)
$ for $\phi \in \Delta_0$. 
\end{enumerate}
\end{lemma}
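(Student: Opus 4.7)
The plan is to prove the two implications separately, uniformly for each $\phi \in \Delta_0$. For the direction $(\ref{item:Pi1FormulationInduction}) \Rightarrow (\ref{item:Pi2FormulationInduction})$ I would use plain bounded instantiation: assume the antecedent of $(\ref{item:Pi2FormulationInduction})$, namely $\forall x(\forall y{<}x\, \phi(y) \to \phi(x))$. For any $x$ and any $z \leq x$, this yields the instance $\forall y{<}z\, \phi(y) \to \phi(z)$, so the bounded antecedent of $(\ref{item:Pi1FormulationInduction})$ is satisfied at $x$; applying $(\ref{item:Pi1FormulationInduction})$ then delivers $\phi(x)$, and universal generalization yields $\forall x\, \phi(x)$.

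For the harder direction $(\ref{item:Pi2FormulationInduction}) \Rightarrow (\ref{item:Pi1FormulationInduction})$, the idea is a standard relativization. Fix $x$ and assume the bounded-step hypothesis $\forall z{\leq}x\, [\forall y{<}z\, \phi(y) \to \phi(z)]$; the goal is to derive $\phi(x)$. I would introduce the auxiliary $\Delta_0$ formula $\chi(z) := (z \leq x) \to \phi(z)$, with $x$ treated as a parameter (the $\Delta_0$ class is closed under propositional connectives, so $\chi \in \Delta_0$), and then apply $(\ref{item:Pi2FormulationInduction})$ to $\chi$. To verify the required step $\forall z(\forall y{<}z\, \chi(y) \to \chi(z))$: assume $\forall y{<}z\, \chi(y)$ and $z \leq x$; then every $y<z$ also satisfies $y \leq x$, so each $\chi(y)$ yields $\phi(y)$; thus $\forall y{<}z\, \phi(y)$, and the bounded-step hypothesis at $z$ then gives $\phi(z)$, i.e.\ $\chi(z)$. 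Applying $(\ref{item:Pi2FormulationInduction})$ now yields $\forall z\, \chi(z)$, and instantiating at $z = x$ (using $x \leq x$) produces $\phi(x)$.

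The only mildly delicate point is the bookkeeping of a few elementary ordering facts used in the relativization step, namely $x \leq x$ and $y{<}z \wedge z{\leq}x \to y{\leq}x$; these are standard consequences of $\Robinson$ together with the $\Delta_0$-induction scheme present on either side of the equivalence, so no genuine obstacle arises. The real content of the lemma is the complexity drop from the $\Pi_2$ scheme $(\ref{item:Pi2FormulationInduction})$ to the equivalent $\Pi_1$ scheme $(\ref{item:Pi1FormulationInduction})$, which is precisely what the paper needs to keep the axioms of the base theories at logical complexity at most $\Pi^0_1$.
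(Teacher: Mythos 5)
Your proof is correct and follows essentially the same route as the paper: the hard direction $(\ref{item:Pi2FormulationInduction}) \Rightarrow (\ref{item:Pi1FormulationInduction})$ is handled in both cases by applying scheme $(\ref{item:Pi2FormulationInduction})$ to the relativized $\Delta_0$ formula $z \leq x \to \phi(z)$ with $x$ as a parameter (the paper writes this as $\phi'(x,u) := x \leq u \to \phi(x)$). You simply spell out the verification of the induction step and the trivial converse direction, which the paper leaves implicit.
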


\begin{proof}
The only non-trivial direction is $\eqref{item:Pi2FormulationInduction} \Rightarrow \eqref{item:Pi1FormulationInduction}$ which follows by applying \eqref{item:Pi2FormulationInduction} to $\phi'(x,u) \ :=\ x\leq u \to \phi(x)$.
\end{proof}

In the paper we shall heavily use formalized provability and the corresponding provability logics. As such, for c.e.\ theories $T$ we fix natural formalizations $[n]_T$ of ``provable in $T$ together with all true $\Pi_n$ sentences'' of complexity $\Sigma_{n+1}$ and the dual consistency notion $\la n\ra_T$ of complexity $\Pi_{n+1}$. When the context allows us to, we shall drop mention of the base theory $T$ and moreover, instead of writing $[0]$ ($\la 0 \ra$) we often write $\Box$ ($\Diamond$). 

We shall typically refrain from distinguishing a formula $\phi$ from its G\"odel number or even a natural syntactical term denoting its G\"odel number. Also, we use the standard dot notation $\Box \, \phi(\dot x)$ to denote a formula with free variable $x$ so that for each $x$ the formula $\Box \, \phi (\dot x)$ is provably equivalent to $\Box\, n$ where $n$ is the G\"odel number of $\phi(t)$ where $t$ is some term (often called numeral) denoting $x$. Note that for non-standard $x$, the corresponding term denoting $x$ will also be non-standard. 

We shall assume that each c.e.~theory $T$ that we consider comes with a $\Delta_0$ formula that defines the set of G\"odel numbers of axioms of $T$ on the standard model.
A main result about formalized provability is formulated in what is nowadays called L\"ob's rule (\cite{Lob:1955:SolutionProblemHenkin}):

\begin{proposition}
Let $T$ be a theory extending \ea. If $T\vdash \Box \phi \to \phi$, then $T\vdash \phi$.
\end{proposition}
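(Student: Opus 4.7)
The plan is to give the standard derivation of Löb's rule from the Hilbert--Bernays--Löb derivability conditions, which are available because $T$ extends \ea\ (so $T$ proves totality of superexponentiation and hence can formalize all the syntactic manipulations we need, in particular cut-elimination bounds and provable $\Sigma_1$-completeness). First I would isolate the three conditions I want to use: (i) necessitation, i.e.\ if $T\vdash \chi$ then $T\vdash \Box\chi$; (ii) distribution $T\vdash \Box(\chi\to\theta)\to(\Box\chi\to\Box\theta)$; and (iii) provable $\Sigma_1$-completeness for the box, $T\vdash\Box\chi\to\Box\Box\chi$. Each of these is standard for the natural $\Sigma_1$ formalization $\Box = [0]_T$ fixed earlier.

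Next I would invoke the fixed-point (diagonal) lemma for $T$ to obtain a sentence $\psi$ such that
\[
T\vdash \psi \leftrightarrow (\Box\psi \to \phi).
\]
From the left-to-right direction and necessitation, $T\vdash \Box\psi\to\Box(\Box\psi\to\phi)$, and then distribution gives $T\vdash \Box\psi\to(\Box\Box\psi\to\Box\phi)$. Combining with condition (iii) I obtain $T\vdash\Box\psi\to\Box\phi$. Plugging in the hypothesis $T\vdash\Box\phi\to\phi$ yields $T\vdash\Box\psi\to\phi$. But this is exactly the right-hand side of the fixed point, so $T\vdash\psi$. One more application of necessitation gives $T\vdash\Box\psi$, and modus ponens with $T\vdash\Box\psi\to\phi$ closes the argument with $T\vdash\phi$.

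There is no real obstacle here other than making sure the three derivability conditions are genuinely available in the present setting. This is where the assumption that $T$ extends \ea\ (and in particular proves the totality of superexponentiation, as stressed in the arithmetical preliminaries) does the work: it guarantees that the $\Sigma_1$ predicate $[0]_T$ satisfies provable $\Sigma_1$-completeness and hence condition (iii), which is the only nontrivial one. Necessitation and distribution are formalizations of elementary proof-theoretic facts available already over very weak base theories.

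Finally, I would remark that the argument is uniform and relativizes: the same derivation goes through with $[n]_T$ in place of $\Box$, using the $\Sigma_{n+1}$-completeness of $[n]_T$, so one obtains Löb's rule for each of the stronger provability predicates used later in the paper.
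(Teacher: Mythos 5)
Your proof is correct: it is the standard derivation of L\"ob's theorem/rule from the diagonal lemma together with the three Hilbert--Bernays--L\"ob derivability conditions, and every step checks out. The paper itself supplies no proof of this proposition at all---it is stated as a classical result with a citation to L\"ob's 1955 paper---so there is no in-paper argument to compare against; you have simply filled in the canonical argument that the paper takes as background. One small remark: the hypothesis that $T$ extends \ea\ is more than enough, and in fact the derivability conditions (including formalized $\Sigma_1$-completeness for $\Box$) already hold over substantially weaker base theories; the paper's assumption of $\ea^+$ and superexponentiation is there for the cut-elimination arguments elsewhere, not specifically for L\"ob's rule. Your closing observation that the argument relativizes verbatim to $[n]_T$ using $\Sigma_{n+1}$-completeness is accurate and is indeed used implicitly throughout the paper.
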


The natural way to prove statements about Turing progression is by transfinite induction. Weaker theories however cannot prove transfinite induction. Schmerl (\cite{Schmerl:1978:FineStructure}) introduced a way to circumvent transfinite induction employing so-called \emph{reflexive transfinite induction}.

\begin{lemma}[Reflexive transfinite induction]
Let $T$ be some theory extending say, \ea, so that 
\[
T \vdash \forall \alpha \ \Big(\Box_T \ \forall \, \beta{<}\dot \alpha \ \phi(\beta) \ \to \ \phi(\alpha)\Big).
\]
Then it holds that $T\vdash \forall \alpha \  \phi (\alpha)$.
\end{lemma}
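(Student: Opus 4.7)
The plan is to invoke L\"ob's rule applied to the sentence $\psi := \forall \alpha \ \phi(\alpha)$ itself, so that the entire argument collapses to a single (formalizable) one-step inference instead of an external induction. Since L\"ob's rule reduces $T \vdash \psi$ to $T \vdash \Box_T \psi \to \psi$, it suffices to prove
\[
T \vdash \Box_T \forall \alpha \ \phi(\alpha) \ \to \ \forall \alpha \ \phi(\alpha).
\]

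To establish this, I reason inside $T$. Assume $\Box_T \forall \alpha \ \phi(\alpha)$ and fix an arbitrary $\alpha$. Since $T$ provably derives $\forall \beta{<}\dot\alpha \ \phi(\beta)$ from $\forall \alpha \ \phi(\alpha)$ by pure logic (for each concrete $\alpha$, instantiation plus weakening), the usual derivability conditions for $\Box_T$ — essentially provable closure under modus ponens together with necessitation of the implication $\forall \alpha \phi(\alpha) \to \forall \beta{<}\dot\alpha \phi(\beta)$ — yield $\Box_T \forall \beta{<}\dot\alpha \ \phi(\beta)$. Now the hypothesis of the lemma, instantiated at $\alpha$, gives precisely $\phi(\alpha)$. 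Since $\alpha$ was arbitrary, $\forall \alpha \ \phi(\alpha)$ follows, discharging the assumption and finishing the implication. L\"ob's rule (applicable since $T$ extends \ea) then delivers $T \vdash \forall \alpha \ \phi(\alpha)$.

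The only delicate point is the passage from $\Box_T \forall \alpha \ \phi(\alpha)$ to $\Box_T \forall \beta{<}\dot\alpha \ \phi(\beta)$ uniformly in $\alpha$. This requires the standard dot-notation conventions: one must check that for the non-standard-safe term denoting $\alpha$, the formalized implication $\forall \alpha \ \phi(\alpha) \to \forall \beta{<}\dot\alpha \ \phi(\beta)$ is provable in $T$ (which it is, as it is an instance of a single logical validity), and then apply the $K$-axiom $\Box_T(A\to B)\to (\Box_T A \to \Box_T B)$. Everything else is formal manipulation; notably, \emph{no} ordinal induction is invoked in the meta-theory, which is exactly the point of Schmerl's trick.
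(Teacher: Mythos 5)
Your proof is correct and takes the same route as the paper: reduce to establishing $T \vdash \Box_T\,\forall\alpha\,\phi(\alpha) \to \forall\alpha\,\phi(\alpha)$ and then apply L\"ob's rule. The paper labels that intermediate step ``clearly''; you have simply filled in the details (the K-axiom argument with dot-notation), and done so correctly.
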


\begin{proof}
Clearly, if $T \vdash \forall \alpha \Big(\Box_T \ \forall \, \beta{<}\dot \alpha \ \phi(\beta) \ \to \ \phi(\alpha)\Big)$, then also 
\[
T \vdash \Box_T \ \forall \alpha \ \phi(\alpha) \ \to \ \forall \alpha \ \phi(\alpha),
\]
and the result  follows from L\"ob's rule.
\end{proof}

For theories $U$ and $V$, we shall write $U\equiv_n V$ for the statement that $U$ and $V$ prove the same $\Pi_{n+1}$ formulas.

\section{Modal preliminaries}

We shall see that the polymodal provability logic $\glp_\omega$ is particularly well-suited to speak about Turing progressions and finite unions thereof.  

\subsection{Provability logics and worms}
We first define a polymodal version of provability logic as introduced by Japaridze in \cite{Japaridze:1988}.

\begin{definition}
The propositional polymodal provability logic $\glp_\omega$ has for each $n<\omega$ a modality $[n]$ with dual modality $\la n\ra$ being short for $\neg [n] \neg$. The language contains the constants $\top$ and $\bot$ for logical truth and falsity respectively. 

The rules of $\glp_\omega$ are Modus Ponens and Necessitation for each $[n]$ modality: $\frac{\phi}{[n]\phi}$. The axioms are
\begin{enumerate}
\item All propositional tautologies in the language of $\glp_\omega$;

\item
$[n](\phi \to \psi) \to ([n]\phi \to [n]\psi)$ for each $n<\omega$ and $\glp_\omega$ formulas $\phi$ and $\psi$;

\item
$[n]([n]\phi \to \phi) \to [n]\phi$ for each $n<\omega$ and $\glp_\omega$ formula $\phi$;

\item
$[n]\phi \to [m]\phi$ for each $n<m<\omega$ and each $\glp_\omega$ formula $\phi$;

\item
$\la n\ra\phi \to [m]\la n\ra \phi$ for each $n<m<\omega$ and each $\glp_\omega$ formula $\phi$;

\end{enumerate}
\end{definition}

\noindent
It is well-known that $[n]\phi \to [n][n]\phi$ is derivable in $\glp_\omega$ and we shall use that without specific mention. The logic $\glp_\omega$ is sound and complete for a wide range of theories $T$ when interpreting the modal operator $[n]$ as the formalized provability predicate $[n]_T$ (\cite{Japaridze:1988, Ignatiev:1993:StrongProvabilityPredicates}). 

A standing assumption throughout all this paper is that all theories that we consider yield soundness of $\glp_\omega$. Moreover, we shall assume that any theory $T$ contains $\ea^+$ and has a set of axioms whose set of G\"odel numbers is definable on the standard model by a $\Delta_0$ formula.

The closed fragment $\glp_\omega^0$ of $\glp_\omega$ consists of all those $\glp_\omega$ theorems that do not contain propositional variables. We define \emph{worms} to be the collection of iterated consistency statements within $\glp_\omega^0$ and denote them by $\Worms$:

\begin{definition}
For each $n<\omega$, the empty worm $\top$ is in $\Worms_n$; We inductively define that if $A\in \Worms_n$ and $\omega> m\geq n$, then $\la m\ra A \in \Worms_n$. The set $\Worms$ of all $\glp_\omega$ worms is just $\Worms_0$.
\end{definition}

\noindent
Often we shall just identify a worm with the string of subsequent modality indices denoting the empty string by $\top$ for convenience. We now define a convenient decomposition of worms that will allow for inductive proofs.

\begin{definition}
For a $\glp_\omega$ worm $A$, its \emph{n-head} --we write $h_n(A)$--is the left-most part of $A$ that consists of only modalities which are at least $n$. The remaining part of $A$ is called the \emph{$n$-remainder} and is denoted by $r_n(A)$. 

More formally: $h_n(\top) = \top$; and $h_n(mA) = mh_n(A)$ in case $m\geq n$ and $\top$ otherwise. Likewise: $r_n(\top)= \top$ and $r_n(mA) = r_n(A)$ in case $m\geq n$ and $mA$ otherwise. 
\end{definition}

\noindent
The following lemma whose proof we leave as an exercise turns out to be very useful.

\begin{lemma}
For each $\glp_\omega$-worm $A$ and for each $n<\omega$, we have 
\[
\glp_\omega\vdash A \leftrightarrow h_n(A) \ \wedge \ r_n(A).
\]
\end{lemma}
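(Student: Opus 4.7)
The plan is to prove the equivalence by induction on the length (number of modalities) of the worm $A$, with the inductive step reducing to a short commutation lemma that bridges a high-index diamond and a low-index diamond.

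First I would isolate the following key sub-lemma: for any $p<m<\omega$ and any $\glp_\omega$-formulas $\phi,\psi$,
\[
\glp_\omega\vdash \langle m\rangle(\phi\wedge\langle p\rangle\psi)\leftrightarrow \langle m\rangle\phi\wedge\langle p\rangle\psi.
\]
The right-to-left direction uses the monotonicity axiom $\langle p\rangle\psi\to [m]\langle p\rangle\psi$ (the axiom scheme with indices $p<m$): from $\langle p\rangle\psi$ we obtain $[m]\langle p\rangle\psi$, and combining with $\langle m\rangle\phi$ via the standard normal-modal rule $\langle m\rangle\phi\wedge [m]\chi\to\langle m\rangle(\phi\wedge\chi)$ yields $\langle m\rangle(\phi\wedge\langle p\rangle\psi)$. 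For the left-to-right direction, monotonicity of $\langle m\rangle$ gives both $\langle m\rangle\phi$ and $\langle m\rangle\langle p\rangle\psi$; then the GLP axiom $[p]\chi\to[m]\chi$ (for $p<m$), whose dual is $\langle m\rangle\chi\to\langle p\rangle\chi$, together with transitivity $\langle p\rangle\langle p\rangle\psi\to\langle p\rangle\psi$ (which follows from the GL-style L\"ob axiom for $[p]$ in the usual way), delivers $\langle p\rangle\psi$.

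Given this sub-lemma, I would now prove the stated equivalence by induction on the length of $A$. The base case $A=\top$ is immediate since $h_n(\top)=r_n(\top)=\top$. For the inductive step, write $A=\langle m\rangle A'$. If $m<n$ then by definition $h_n(A)=\top$ and $r_n(A)=A$, so the equivalence is trivial. If $m\geq n$ then $h_n(A)=\langle m\rangle h_n(A')$ and $r_n(A)=r_n(A')$. By the induction hypothesis $A'\leftrightarrow h_n(A')\wedge r_n(A')$, hence $A\leftrightarrow \langle m\rangle\bigl(h_n(A')\wedge r_n(A')\bigr)$. If $r_n(A')=\top$ this simplifies directly to $\langle m\rangle h_n(A')=h_n(A)\wedge r_n(A)$. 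Otherwise, by the shape of the definition of $r_n$, the remainder $r_n(A')$ begins with a modality strictly less than $n$, so $r_n(A')=\langle p\rangle B$ with $p<n\leq m$; the sub-lemma then gives $\langle m\rangle(h_n(A')\wedge\langle p\rangle B)\leftrightarrow \langle m\rangle h_n(A')\wedge\langle p\rangle B=h_n(A)\wedge r_n(A)$, closing the induction.

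The only non-routine step is the sub-lemma, and within it the step $\langle m\rangle\langle p\rangle\psi\to\langle p\rangle\psi$: here one must invoke both the monotonicity-of-boxes axiom (in its dual form) and transitivity of $\langle p\rangle$, rather than the more visible axiom $\langle p\rangle\psi\to[m]\langle p\rangle\psi$. Once this commutation is in hand, the induction is essentially bookkeeping on the definitions of $h_n$ and $r_n$.
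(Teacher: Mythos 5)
Your proof is correct; the paper leaves this lemma as an exercise, and your argument is essentially the standard one. The commutation sub-lemma $\langle m\rangle(\phi\wedge\langle p\rangle\psi)\leftrightarrow\langle m\rangle\phi\wedge\langle p\rangle\psi$ for $p<m$ is exactly the right tool (right-to-left via axiom~(5) and the K-theorem $\langle m\rangle\phi\wedge[m]\chi\to\langle m\rangle(\phi\wedge\chi)$, left-to-right via the dual of axiom~(4) plus transitivity of $\langle p\rangle$), and the induction on the length of the worm with the case split on whether the leading modality is $\geq n$ and on whether $r_n(A')$ is empty closes the argument without gaps.
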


\subsection{Lost in translation}

It is well-known (\cite{Beklemishev:2005:Survey, BeklemishevFernandezJoosten:2014:LinearlyOrderedGLP}) that worms constitute an alternative ordinal notation systems if we order them by 
\[
A <_n B \ :\Leftrightarrow \ \glp_\omega \vdash B \to \la n\ra A.
\]
\begin{proposition}
$\la \varepsilon_0, < \ra \cong \la \Worms_n/\equiv, <_n\ra$. 
\end{proposition}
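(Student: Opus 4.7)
My plan is to establish the isomorphism by constructing an explicit order-preserving bijection $o: \Worms_n/\equiv \to \varepsilon_0$. First I would reduce to the case $n=0$ via a shift argument: the map $\sigma_n$ that decrements every modal index in a worm by $n$ is a bijection between $\Worms_n$ and $\Worms_0$. Since the axioms of $\glp_\omega$ are uniform in the modal indices, $\sigma_n$ preserves $\glp_\omega$-provability among closed formulas using only indices $\geq n$, and hence sends $\equiv$ to $\equiv$ and the ordering $<_n$ to $<_0$. Thus the general claim reduces to $\la \Worms_0/\equiv, <_0\ra \cong \la \varepsilon_0, <\ra$.

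For $n=0$, I would define $o$ on $\Worms_0$ by well-founded recursion on a rank such as the sum of $2^{m+1}$ over all modal indices $m$ occurring in the worm. The central tool is the head-remainder decomposition from the preceding lemma applied at threshold $1$: a nonempty worm $A$ satisfies $A\equiv h_1(A)\cdot r_1(A)$, with $H:=h_1(A)\in\Worms_1$ and $R:=r_1(A)$ either empty or beginning with $\la 0\ra$, and both $H^-:=\sigma_1(H)\in\Worms_0$ and $R$ strictly below $A$ in rank. I then set $o(\top)=0$ and $o(A)=\omega^{o(H^-)}+o(R)$. This assignment produces, by construction, Cantor normal forms of ordinals below $\varepsilon_0$. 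Surjectivity onto $\varepsilon_0$ follows by induction on Cantor normal forms: $\omega^{\alpha_1}+\cdots+\omega^{\alpha_k}$ with $\alpha_1\geq\cdots\geq\alpha_k$ is realised by concatenating the worms $\sigma_1^{-1}(A_i)\,\la 0\ra$ for worms $A_i$ with $o(A_i)=\alpha_i$.

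To show $o$ is well-defined on $\equiv$-classes, I would prove by induction on rank that every worm is $\equiv$-equivalent to a canonical form in which repeated head-remainder decompositions terminate with a unique sequence of Cantor exponents, and that the axioms $\la n\ra\phi\to[m]\la n\ra\phi$ and $[n]\phi\to[m]\phi$ (for $n<m$) together with the lemma $A\equiv h_n(A)\wedge r_n(A)$ justify all the reordering moves needed to pass between $\equiv$-equivalent worms. This ensures that $o$ depends only on the equivalence class.

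The main obstacle is order-preservation: $A<_0 B \iff o(A)<o(B)$. The nontrivial direction requires characterising, purely combinatorially in terms of the head-remainder data, when $\glp_\omega\vdash B\to\la 0\ra A$. I would argue by simultaneous induction on the ranks of $A$ and $B$, repeatedly splitting along $h_1/r_1$ to reduce the comparison to one between the top Cantor exponents $o(H^-)$ of $A$ and $B$ (and, in the case of equal top exponents, to a comparison of the remainders). Making these reductions rigorous hinges on the reduction procedure for the closed fragment of $\glp_\omega$ originating with Ignatiev \cite{Ignatiev:1993:StrongProvabilityPredicates} and developed in detail in \cite{Beklemishev:2005:Survey, BeklemishevFernandezJoosten:2014:LinearlyOrderedGLP}, which I would invoke to certify each inductive step.
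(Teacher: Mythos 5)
The paper itself offers no proof of this proposition: it states it as a known fact, and the explicit isomorphism is given only in the following proposition, which is in turn attributed to an external reference. So there is no ``paper's own proof'' to match against; your sketch must stand on its own, and it does not.

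The central object of your argument, the map $o$, is not well-defined as stated and is wrong even after the obvious repair. First, the recursion $o(A)=\omega^{o(H^-)}+o(R)$ with $H=h_1(A)$, $R=r_1(A)$ is ill-founded: when $A$ begins with $\la 0\ra$ we have $h_1(A)=\top$ and $r_1(A)=A$, so $R$ is \emph{not} strictly below $A$ in rank and the recursion calls itself. Second, suppose you repair this by writing $R=\la 0\ra R'$ and recursing on $R'$. The order of the summands is then still wrong: it should be $o(A)=o(R')+\omega^{o(H^-)}$, with the $\omega$-power on the \emph{right}, because the leftmost block of a worm contributes the high-order part of the ordinal. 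With your ordering one computes $o(\la 0\ra\la 1\ra\top)=\omega^0+o(\la 1\ra\top)=1+\omega=\omega$ and also $o(\la 1\ra\la 0\ra\top)=\omega^{o(\la 0\ra\top)}+0=\omega$, so the two worms are assigned the same value; yet $\la 0\ra\la 1\ra\top$ and $\la 1\ra\la 0\ra\top$ are not $\glp_\omega$-equivalent (they are separated in Ignatiev's model, and the standard assignment gives $\omega+1$ and $\omega$ respectively). Hence your $o$ fails to be injective on $\equiv$-classes and cannot be the desired isomorphism. This also vitiates the intermediate claim that the recursion ``produces, by construction, Cantor normal forms'': in the corrected formula the monomials accumulate with \emph{increasing} exponents as you peel the worm from left to right, so the resulting expression is not in normal form and your surjectivity argument, built on that premise, does not go through. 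The shift reduction to $n=0$ and the appeal to the Ignatiev/Beklemishev reduction calculus for order-preservation are reasonable moves, but they inherit the error in $o$. The fix is to define $o$ as in the paper's $o_0$ (equivalently, decompose at the $\la 0\ra$'s and add the $\omega$-power term on the right), and then argue well-definedness, injectivity and order-preservation for that map.
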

Here, $\Worms_n/\equiv$ denotes $\Worms_n$ modulo $\glp_\omega$ provable equivalence and $\la \varepsilon_0, <\ra$ is just the ordinal $\varepsilon_0 = \sup \{ \omega, \omega^{\omega}, \omega^{\omega^{\omega}}, \ldots\}$ under the usual ordinal ordering $<$. 

Worms can be related smoothly to more standard ordinal notations using the so-called hyper-exponentiation functions $e^n : {\sf On} \to {\sf On}$ (see \cite{FernandezJoosten:2012:Hyperations}) where $\sf On$ denotes the class of ordinals and $e^0$ is the identity function; $e^1 : \xi \mapsto -1 + \omega^\xi$; and $e^{n+m} = e^n\circ e^m$. The following theorem is proven in \cite{FernandezJoosten:2014:WellOrders}:

\begin{proposition}\label{theorem:wormCalculusForGLPworms}
Let $o_0: \Worms \to {\sf On}$ be defined by 
\begin{enumerate}
\item
$o_0(\top) = 0$;
\item
$o_0(B0A) = o_0(A) + 1 + o_0(B)$;
\item
$o_0(n\uparrow A) = e^n(o_0(A))$.
\end{enumerate}
Here, $n\uparrow A$ denotes the worm that arises by simultaneously substituting any modality $m$ in $A$ by $n+m$.

 Further, for any worm $A\in \Worms$ we define $o_n(n \uparrow A) := o_0(A)$ and $o_n(A) := o_n(h_n(A))$. We now have that 
\[
o_n:  \la \Worms_n/\equiv, <_n\ra  \cong \la \varepsilon_0, < \ra.
\]
\end{proposition}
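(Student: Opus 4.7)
The plan is to reduce to the case $n=0$ and then treat $o_0$ directly. For any $A\in\Worms_n$, all modalities are $\geq n$, so $A = h_n(A)$ and $A$ has a unique representation $n\uparrow A'$ with $A'\in\Worms_0$. The shift $A'\mapsto n\uparrow A'$ is a bijection $\Worms_0 \to \Worms_n$; since the substitution $[m]\mapsto[m+n]$ carries $\glp_\omega$-axioms to $\glp_\omega$-axioms, the shift preserves $\equiv$ and maps $<_0$ to $<_n$, and combined with the preceding proposition identifying both sides as well-orders of type $\varepsilon_0$, it is an order-isomorphism $(\Worms_0/\!\equiv,<_0) \cong (\Worms_n/\!\equiv,<_n)$. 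Together with $o_n(n\uparrow A') := o_0(A')$, this reduces the theorem to the case $n=0$.

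For $n=0$, I would proceed in three steps. First, verify $o_0$ is well-defined on syntactic worms by fixing the canonical reading of clause 2 in which $B$ is the initial segment preceding the leftmost $0$; the three clauses then partition worms into $\top$, worms containing a $0$, and non-empty worms with all modalities $\geq 1$ (to which clause 3 applies with $n=1$). Self-consistency of clause 3 under the alternative shifts $n=2,3,\ldots$ follows from the iteration identity $e^{n+m}=e^n\circ e^m$. Second, show that $o_0$ factors through $\glp_\omega$-provable equivalence by appealing to a normal-form theorem for closed $\glp_\omega$-worms and checking that $o_0$ is preserved by each rewriting rule producing normal forms. The identities split into associativity for clause 2, which reduces to ordinal associativity of $\alpha+1+\beta$, and absorption identities induced by the linearity axiom $\la n\ra\phi \to [m]\la n\ra\phi$ for $n<m$, which translate via clause 3 to known algebraic identities for the hyperexponential $e^n$. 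Third, show order-preservation $A<_0 B \Rightarrow o_0(A)<o_0(B)$ by induction on the sum of worm lengths, casing on the canonical shape of $A$ and $B$; since the preceding proposition already identifies $\la\Worms_0/\!\equiv,<_0\ra$ and $\la\varepsilon_0,<\ra$ as isomorphic well-orders, an order-preserving map between them is automatically an isomorphism, and surjectivity of $o_0$ comes for free.

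The main obstacle is the invariance step. Rather than unpacking the $\glp_\omega$-axiomatization directly, the cleanest route is to fix a terminating, confluent rewriting system on worms whose normal forms are pairwise $\glp_\omega$-inequivalent, and then verify rule-by-rule that $o_0$ is unchanged; this reduces invariance to a bookkeeping exercise against the arithmetic of hyperexponentials, with no essentially new idea required.
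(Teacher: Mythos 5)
The paper's own ``proof'' of Proposition~\ref{theorem:wormCalculusForGLPworms} is a bare citation to \cite{FernandezJoosten:2014:WellOrders}, so there is no internal argument to compare your attempt against; I will assess it on its own terms. Your reduction to $n=0$ via the shift $A' \mapsto n\uparrow A'$ is sound: the shift is a syntactic bijection $\Worms_0 \to \Worms_n$, it preserves derivability (hence $\equiv$ and $<$), and an order-preserving \emph{bijection} between linear orders is automatically an order-isomorphism, so it descends to an isomorphism $\Worms_0/\!\equiv\ \cong\ \Worms_n/\!\equiv$. The well-definedness discussion of clause~2 and the intended rewriting approach for invariance are also plausible in outline.

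There is, however, a genuine logical error in the last step. You claim that ``since the preceding proposition already identifies $\la\Worms_0/\!\equiv,<_0\ra$ and $\la\varepsilon_0,<\ra$ as isomorphic well-orders, an order-preserving map between them is automatically an isomorphism, and surjectivity of $o_0$ comes for free.'' This is false: a strictly increasing map from a well-order of type $\varepsilon_0$ into $\varepsilon_0$ need not be onto. Simple counterexamples are $\alpha \mapsto 1+\alpha$ and $\alpha \mapsto \omega^\alpha$ on $\varepsilon_0$, both strictly increasing, neither surjective. The bijection argument I endorsed for the shift relied on already \emph{knowing} the map is a bijection at the level of underlying sets; you do not have that for $o_0 : \Worms_0/\!\equiv\ \to \varepsilon_0$. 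Surjectivity must be established directly. Fortunately this is easy by transfinite induction on $\xi < \varepsilon_0$ using Cantor normal form: $\top$ yields $0$; prepending $0$ yields successors; for $\xi = \alpha + \omega^\beta$ with $\beta \geq 1$ take $W = (1\uparrow W_\beta)\,0\,W_\alpha$ (or just $1\uparrow W_\beta$ when $\alpha=0$) where $W_\alpha, W_\beta$ are obtained from the induction hypothesis, and check $o_0(W) = \alpha + 1 + \omega^\beta = \xi$. Add this step.

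Beyond that, be aware that the ``invariance'' step is where essentially all of the mathematical content of this proposition lives, and you have delegated it entirely to an unspecified terminating confluent rewriting system together with a ``bookkeeping exercise.'' For a self-contained proof you would need to exhibit the rewriting rules (which encode the axioms $[n]\phi\to[m]\phi$ and $\la n\ra\phi\to[m]\la n\ra\phi$ for $n<m$), prove confluence/termination, show normal forms are pairwise $\glp_\omega$-inequivalent, and verify $o_0$-invariance rule by rule. These are exactly the results of \cite{FernandezJoosten:2014:WellOrders} that the paper cites, so at that level of detail your plan does not avoid relying on that reference.
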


In previous papers on polymodal provability logics, various proofs are rather involved since they work with classical ordinal notation systems. Worms have further logical and algebraic structure so that in the context of provability logics and Turing progressions, they are the better ordinal notation systems. 

\begin{notation}\label{notation:TuringProgressionsIndexedByWorms}
For $A{\in} \Worms$, by $T_n^A$ we shall denote the Turing progression $T_n^{o_n(A)}$.
\end{notation}

Note that in virtue of our definitions we have that $T^A_n = T^{h_n(A)}_n$ and we shall use both notations interchangeably. Moreover, we note that a worm can denote various objects: an iterated consistency statement in modal logic, an iterated consistency statement in the language of arithmetic, and an ordinal. The context will always reveal what kind of object the occurrence of a particular worm denotes and thus we refrain from separating the different possible denotations by introducing extra notation.

\section{Turing Taylor expansions}

We shall see that various theories can be written as the finite union of simple Turing progressions which we call the \emph{Turing-Taylor expansion}. We start by looking at theories axiomatized by worms. Recall that all our theories are in the language containing a symbol for super-exponentiation and are supposed to come with a $\Delta_0$ axiomatization.

\subsection{Worms and Turing progressions}

The generalized Turing progressions $T^n_A$ are not too sensitive to adding ``small" elements to the base theory as is expressed by the following lemma. 

\begin{lemma}\label{theorem:GeneralizedTuringProgressionsInvariantToSmallChangesBaseTheory}
For any theory $T$ and for any $\sigma \in \Sigma_{n+1}$, we have provably in $\ea^+$ that
\[
(T+\sigma)^\alpha_n \equiv (T)^\alpha_n +\sigma \ \ \ \ \ \mbox{for any $\alpha < \epsilon_0$}.
\]
In particular, for any theory $T$ and for any $GLP_\omega$ worm $A$, if $m<n<\omega$, then 
\[
(T+mA)^\alpha_n \equiv (T)^\alpha_n +mA \ \ \ \ \ \mbox{for any $\alpha < \epsilon_0$}.
\]
\end{lemma}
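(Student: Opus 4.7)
The plan is to prove the equivalence by reflexive transfinite induction on $\alpha$. The inclusion $T^\alpha_n + \sigma \subseteq (T+\sigma)^\alpha_n$ is routine: iterated Turing progressions are monotone in the base theory, so $T^\alpha_n \subseteq (T+\sigma)^\alpha_n$, and the sentence $\sigma$ sits inside $T{+}\sigma$ and hence inside every stage of the progression on the right. The substantive work lies in the reverse direction.

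For the reverse direction I unfold the successor step $\alpha = \beta{+}1$: one has $(T{+}\sigma)^{\beta+1}_n = (T{+}\sigma)^\beta_n + \la n\ra_{(T+\sigma)^\beta_n}\top$. Applying the inductive hypothesis to the subscript gives equivalence with $T^\beta_n + \sigma$, and the formalized deduction theorem yields $\la n\ra_{T^\beta_n + \sigma}\top \leftrightarrow \la n\ra_{T^\beta_n}\sigma$. The whole step now rests on the key equivalence
\[
\sigma \ \to \ \bigl(\la n\ra_{T^\beta_n}\top \ \leftrightarrow \ \la n\ra_{T^\beta_n}\sigma\bigr) \qquad (\sigma \in \Sigma_{n+1}).
\]
The direction $\leftarrow$ is just weakening. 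For $\to$, write $\sigma$ as $\exists \vec x\, \pi(\vec x)$ with $\pi \in \Pi_n$; under $\sigma$ there is a true witness $\pi(\vec k)$, which is a true $\Pi_n$ sentence, and its conjunction with any true $\Pi_n$ oracle is again a true $\Pi_n$ sentence. Since $\la n\ra_T$ asserts consistency with all true $\Pi_n$ sentences, $\la n\ra_{T^\beta_n}\top$ forces $\la n\ra_{T^\beta_n}\sigma$. Putting these pieces together delivers the successor step.

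The limit case is painless: both $(T{+}\sigma)^\lambda_n$ and $T^\lambda_n + \sigma$ are the union of their $\beta$-stages for $\beta<\lambda$, and the reflexive hypothesis supplies stage-wise equivalence. The worm corollary follows immediately from the main statement: when $m<n$, the arithmetic reading of the worm $mA$ has complexity $\Pi_{m+1}$, which is contained in $\Sigma_{n+1}$.

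The main obstacle is executing the successor step under the formal $\ea^+$-box. The semantic ``pick a true witness'' reasoning has to be replaced by its internal counterpart, provable $\Sigma_{n+1}$-completeness, which $\ea^+$ handles but which must be invoked inside a nested $[n]$-predicate. More delicate is that the reflexive inductive hypothesis at stage $\beta$ delivers only an equivalence of theories, and one must verify that this suffices to interchange $(T{+}\sigma)^\beta_n$ and $T^\beta_n + \sigma$ as subscripts inside $\la n\ra$ — this reduces to showing that the formal provability predicates built from the two $\Delta_0$-axiomatizations are provably equivalent. Finally, the induction formula $\phi(\alpha)$ must be phrased uniformly over worm-indexed stages via the isomorphism $o_n$ of Proposition~\ref{theorem:wormCalculusForGLPworms} for reflexive transfinite induction to cover the whole of $\varepsilon_0$.
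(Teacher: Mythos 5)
Your argument is correct and is essentially the same as the paper's: a reflexive transfinite induction where the only non-trivial content of the successor step is provable $\Sigma_{n+1}$-completeness, $\sigma \to [n]_T\,\sigma$, which is exactly what turns your semantic ``pick a true witness'' step into a formal one. The paper simply compresses all of this into a one-line proof, so your write-up is a faithful (and more explicit) reconstruction rather than a different route.
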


\begin{proof}
By a straight-forward reflexive transfinite induction using provable $\Sigma_{n+1}$-completeness at the inductive step: for $n<\omega$ and $\sigma \in \Sigma_{n+1}$, we have
\[
\sigma \to [n]_T \, \sigma.
\]
\end{proof}

\noindent
The main motor to relate provability logics to Turing progression is by means of the following theorem.

\begin{theorem}\label{theorem:wormsAndGeneralizedTuringProgressions}
Let $T$ be some elementary presented theory containing $\ea^+$ whose axioms have logical complexity at most $\Pi_{n+1}$ and let $A$ be some worm in $\Worms_n$. We have, provably in $\ea^+$, that 
\[
T+ A \equiv_n T_n^{A}.
\]
\end{theorem}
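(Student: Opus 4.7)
The plan is to prove $T+A \equiv_n T_n^A$ by reflexive transfinite induction on the worm $A$ along the well-order $<_n$ on $\Worms_n$, which is order-isomorphic to $\varepsilon_0$. The base case $A = \top$ is immediate, since $T+\top = T = T_n^0 = T_n^\top$. For the inductive step, I would split on the leading modality of $A$. If $A = nB$ with $B \in \Worms_n$, then $o_n(A) = o_n(B) + 1$ is a successor and by definition $T_n^A = T_n^B \cup \{\la n \ra_{T_n^B}\top\}$. The reflexive IH gives $T+B \equiv_n T_n^B$. The auxiliary observation is that $\la n \ra_U \top$, as an arithmetical $\Pi_{n+1}$-statement, depends only on the $\Sigma_n$-theorems of $U$, and $\Sigma_n$-sentences are semantically $\Pi_{n+1}$; hence $\Pi_{n+1}$-equivalent theories yield equivalent $\la n \ra$-consistency statements. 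Together with the IH and the identity $\la n\ra_{T + B} \top \leftrightarrow \la n\ra_T B$, one obtains $\la n\ra_{T_n^B}\top \leftrightarrow A$, and so $T_n^A \equiv_n T + B + A$. One then needs that over $T + A$ the extra axiom $B$ is $\Pi_{n+1}$-redundant: when $B$'s leading modality is $n$ the principle $\la n\ra\la n\ra\phi \to \la n\ra\phi$ (dual to $[n]\phi\to[n][n]\phi$) gives $A \to B$ outright; when it is $m > n$ a more subtle modal argument combining the axiom $\la n \ra \phi \to [m] \la n \ra \phi$ with the fact that $B$ has complexity exceeding $\Pi_{n+1}$ is required.

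If instead $A = mB$ with $m > n$, then $o_n(A)$ is a limit and $T_n^A = \bigcup_{C <_n A} T_n^C$. Combining the IH with the preservation observation, $T_n^A$ is $\Pi_{n+1}$-equivalent to $T + \{\, nC : C <_n A \,\}$. Each axiom $nC$ is a $\glp_\omega$-consequence of $A$ by the definition of $<_n$, so $T + A$ proves all of them. Conversely, any $\Pi_{n+1}$-theorem of $T + \{nC : C <_n A\}$ uses only finitely many axioms $nC_1, \ldots, nC_k$, and by linearity of $<_n$ combined with the law $\la n\ra\la n\ra\chi \to \la n\ra\chi$ these collapse to a single $nC_{\max}$ with $C_{\max} <_n A$; since $C_{\max} <_n A$ means $\glp_\omega \vdash A \to \la n\ra C_{\max}$, arithmetical soundness of $\glp_\omega$ gives $T + A \vdash nC_{\max}$, and hence the desired $\Pi_{n+1}$-theorem.

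The main technical obstacle is the formalization inside $\ea^+$ of the preservation observation in a form usable within the reflexive induction: a syntactic, elementary translation sending $T_n^B$-proofs of $\Pi_{n+1}$-statements to $T+B$-proofs (and vice versa) that can live within a single application of the induction hypothesis. A secondary and equally delicate issue is the successor case when $B$'s leading modality strictly exceeds $n$, where $A \to B$ is not available in $\glp_\omega$ and the redundancy of $B$ modulo $\Pi_{n+1}$ over $T + A$ has to be extracted from a separate conservation argument rather than read off from a single modal axiom.
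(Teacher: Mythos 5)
The paper itself does not give a self-contained proof: it is a one-line appeal to reflexive transfinite induction with a pointer to Beklemishev's survey. Your overall scaffold---reflexive transfinite induction along $<_n$ on $\Worms_n$, split into a successor case when $A$ begins with modality $n$ and a limit case when it begins with some $m>n$---is the right one and matches the standard route. But there are two genuine gaps, and they are precisely where the theorem is doing real mathematical work, not mere formalization bookkeeping.

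In the successor case $A=nB$, you pass from the induction hypothesis $T_n^B\equiv_n T+B$ to ``$T_n^A\equiv_n T+B+A$''. This step adds $A$ to both sides of a $\Pi_{n+1}$-equivalence, but $A=nB$ is $\Pi_{n+1}$, not $\Sigma_{n+1}$, so Lemma \ref{theorem:ConservativeExtensions} does not apply; the paper's own footnote (on $\PRA+\neg\isig 1$ versus $\isig 1$) is exactly a counterexample to the general form of the inference you are making. Worse, the subsequent step needs $B$ to be $\Pi_{n+1}$-conservative over $T+nB$ when $B$ begins with a modality $m>n$. You call this ``a more subtle modal argument,'' but in fact there is no modal argument: $\glp_\omega\not\vdash nB\to B$ in this case (e.g.\ $\la 0\ra\la 1\ra\top\to\la 1\ra\top$ already fails in Ignatiev's model at $\la\omega+1,0,0,\ldots\ra$). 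The conservativity must come from arithmetic, not from $\glp_\omega$.

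In the limit case $A=mB$ with $m>n$ you establish $T+A\vdash T+\{nC:C<_n A\}$ (soundness of $\glp_\omega$) and then, ``conversely,'' argue that every $\Pi_{n+1}$-theorem of $T+\{nC:C<_n A\}$ is a theorem of $T+A$ via collapse to a single $nC_{\max}$. That is the same (easy) direction again: both halves of what you prove show the union is weaker than $T+A$. What is missing is the hard converse, that every $\Pi_{n+1}$-theorem of $T+A$ is already a theorem of $T_n^A=\bigcup_{C<_n A}T_n^C$. This is Beklemishev's \emph{reduction property}: $\la m\ra$-consistency (for $m>n$) is $\Pi_{n+1}$-conservative over $\omega$-iterated $\la n\ra$-consistency. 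It is not a consequence of the induction hypothesis together with modal reasoning; it is a free-standing cut-elimination-style conservation theorem, and it is the reason the paper insists on a base theory proving $\sf supexp$. Without it (or an equivalent), the induction cannot close, and both your successor and limit cases collapse to it. So the proposal identifies the right induction but leaves out the one lemma that makes the theorem nontrivial.
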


\begin{proof}
By reflexive transfinite induction. We refer to \cite[Theorem 17]{Beklemishev:2005:Survey} for details.
\end{proof}

In general we do of course not have\footnote{It is known that $\PRA + \neg \isig{1}$ and \isig{1} are $\Pi_2^0$ equivalent (see \cite[Lemma 3.4]{Joosten:2005:ClosedFragmentILPRAwithIsig1}). Clearly, $\PRA + \neg \isig{1} + \isig{1} \not \equiv_1 \isig{1}$.} that if $U\equiv_{n}V$, then $U + \psi \equiv_n V + \psi$ for theories $U$ and $V$ and formulas $\psi$. However, in the case of Turing progressions we can include ``small" additions on both sides and preserve conservativity.

\begin{lemma}\label{theorem:addingSmallWormsToT+A}
Let $T$ some theory whose axioms have logical complexity at most $\Pi_{n+1}$ and let $A$ be some worm in $\Worms_n$. Moreover, let $B$ be any worm and $m<n$. We have, verifiably in $T$, that 
\[
T + A + mB  \equiv_n T_n^{A} + mB.
\] 
\end{lemma}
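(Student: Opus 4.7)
The plan is to combine Lemma \ref{theorem:GeneralizedTuringProgressionsInvariantToSmallChangesBaseTheory} and Theorem \ref{theorem:wormsAndGeneralizedTuringProgressions} by absorbing the worm $mB$ into the base theory, doing the worm/Turing-progression calculation, and then extracting $mB$ again. The whole argument is formalizable in $\ea^+ \subseteq T$.

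First I would check the complexity bookkeeping. The worm $mB$ has the form $\la m\ra C$ for some worm $C$, so it is a $\Pi_{m+1}$ sentence. Since $m<n$, this gives $mB \in \Pi_{m+1} \subseteq \Pi_n \subseteq \Pi_{n+1}$, and also $mB \in \Pi_n \subseteq \Sigma_{n+1}$ (via $\Pi_n \subseteq \Delta_{n+1}$). Hence the enriched base theory $T' := T + mB$ still has all its axioms of logical complexity at most $\Pi_{n+1}$, so $T'$ satisfies the hypothesis of Theorem \ref{theorem:wormsAndGeneralizedTuringProgressions}.

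Next, applying Theorem \ref{theorem:wormsAndGeneralizedTuringProgressions} to $T'$ and the worm $A \in \Worms_n$ yields, verifiably in $\ea^+$,
\[
T + mB + A \ \equiv_n \ (T+mB)^A_n.
\]
At the same time, because $mB \in \Sigma_{n+1}$, Lemma \ref{theorem:GeneralizedTuringProgressionsInvariantToSmallChangesBaseTheory} (instantiated with $\sigma := mB$ and ordinal $o_n(A)<\varepsilon_0$, which is meaningful via Notation \ref{notation:TuringProgressionsIndexedByWorms}) gives
\[
(T+mB)^A_n \ \equiv \ T^A_n + mB.
\]
Chaining these two equivalences closes the argument; all steps are formalized in $\ea^+$, and hence in $T$, so the conclusion is verifiable in $T$ as required.

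The step I expect to require the most care is the appeal to Lemma \ref{theorem:GeneralizedTuringProgressionsInvariantToSmallChangesBaseTheory} in its $\Sigma_{n+1}$-form rather than its specialized worm-form: I must be sure that $mB$ really does land in $\Sigma_{n+1}$ under the given syntactic complexity convention, and that the ordinal $o_n(A)$ is below $\varepsilon_0$ so that the lemma applies. Both are immediate from the preliminaries (the first by the $\Pi_{m+1}\subseteq \Sigma_{n+1}$ containment noted above, the second by Proposition \ref{theorem:wormCalculusForGLPworms}), so no genuine obstacle arises; the proof really is just a two-line composition of the preceding results.
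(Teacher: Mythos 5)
Your proof is correct and follows essentially the same route as the paper's: absorb $mB$ into the base theory, invoke Theorem \ref{theorem:wormsAndGeneralizedTuringProgressions} on $T+mB$, and then strip $mB$ back out via Lemma \ref{theorem:GeneralizedTuringProgressionsInvariantToSmallChangesBaseTheory}. The only cosmetic difference is that you invoke the general $\Sigma_{n+1}$ clause of that lemma (via $mB\in\Pi_n\subseteq\Sigma_{n+1}$) where the paper could equally well cite its specialized ``in particular'' worm clause directly.
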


\begin{proof}
As $m<n$ we have that $mB \in \Pi_{n}$. Whence, we can apply Theorem \ref{theorem:wormsAndGeneralizedTuringProgressions} to the theory $T+mB$ and obtain
\[
T + mB + A \equiv_n (T+ mB)_n^{A}
\] 
However, by Lemma \ref{theorem:GeneralizedTuringProgressionsInvariantToSmallChangesBaseTheory} we see that 
\[
(T+ mB)_n^{A}\equiv T_n^{A} + mB, \ \mbox{ whence } \ T + mB + A \equiv_n T_n^{A} + mB.
\]
\end{proof}
\noindent
From this lemma we obtain the following simple but very useful corollary.

\begin{corollary}\label{theorem:T+AIsAlmostATuringProgression}
Let $T$ be some theory whose axioms have logical complexity at most $\Pi_{n+1}$. Moreover, let $A$ be any worm. We have verifiably in $T$ that 
\[
T + A \ \equiv_n \  T_n^{h_n(A)} + r_n(A) \ \equiv_n \ T_n^{A} + r_n(A).
\]
\end{corollary}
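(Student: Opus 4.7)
The plan is to reduce the corollary to the preceding lemma via the head–remainder decomposition. By the earlier lemma on $h_n$ and $r_n$ we have, provably in \glp, that $A \leftrightarrow h_n(A) \wedge r_n(A)$; this equivalence transfers to the arithmetic interpretation under $T$ because $T$ validates $\glp_\omega$. So, provably in $T$, the theory $T+A$ is the same as $T + h_n(A) + r_n(A)$.

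Now observe that $h_n(A) \in \Worms_n$ by definition, so it is an admissible index for Theorem~\ref{theorem:wormsAndGeneralizedTuringProgressions}. Moreover, $r_n(A)$ is either $\top$ or begins with a modality $m<n$, hence can be written as $mB$ for a worm $B$ (in the degenerate case $r_n(A) = \top$ the corollary follows directly from Theorem~\ref{theorem:wormsAndGeneralizedTuringProgressions} applied to $h_n(A)$, since adding $\top$ is vacuous). In the non-degenerate case I would apply Lemma~\ref{theorem:addingSmallWormsToT+A} with the worm $h_n(A) \in \Worms_n$ playing the role of $A$ there and $r_n(A) = mB$ playing the role of the ``small'' addition. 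This yields
\[
T + h_n(A) + r_n(A) \ \equiv_n \ T_n^{h_n(A)} + r_n(A),
\]
which combined with the rewriting of the previous paragraph gives the first claimed equivalence.

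For the second equivalence it suffices to recall Notation~\ref{notation:TuringProgressionsIndexedByWorms} and the remark right after it: $T_n^{A} := T_n^{o_n(A)}$ and by definition $o_n(A) = o_n(h_n(A))$, so $T_n^{A}$ and $T_n^{h_n(A)}$ are literally the same theory. Hence the two displayed $\equiv_n$ in the corollary coincide.

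The only non-routine point I anticipate is making sure the application of Lemma~\ref{theorem:addingSmallWormsToT+A} is legitimate: one needs that $T$ still has axioms of complexity $\Pi_{n+1}$ (which is given), that $h_n(A) \in \Worms_n$ (immediate from the definition of the head), and that $r_n(A)$ is of the required form $mB$ with $m<n$ (immediate from the definition of the remainder, after treating the empty case separately). All the formalization in $T$ is inherited from Theorem~\ref{theorem:wormsAndGeneralizedTuringProgressions} and Lemma~\ref{theorem:addingSmallWormsToT+A}, so no additional reflexive transfinite induction is needed here.
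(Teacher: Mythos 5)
Your proposal is correct and follows essentially the same route as the paper's proof: decompose $A$ via $\glp_\omega \vdash A \leftrightarrow h_n(A) \wedge r_n(A)$, apply Lemma~\ref{theorem:addingSmallWormsToT+A} with $h_n(A) \in \Worms_n$ and the small addition $r_n(A)$, and note that $T_n^A$ and $T_n^{h_n(A)}$ coincide by Notation~\ref{notation:TuringProgressionsIndexedByWorms}. Your explicit handling of the degenerate case $r_n(A) = \top$ is a slight elaboration but does not change the argument.
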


\begin{proof}
Since $\glp \vdash A \leftrightarrow h_n(A) \wedge r_n(A)$ and since by assumption $\glp_\omega$ is sound w.r.t.\ $T$ we see that $T+ A \equiv T+ h_n(A) + r_n(A)$.
The worm $r_n(A)$ is either empty or of the form $mA$ for some $m<n$. Clearly, $h_n(A) \in \Worms_n$. Thus, we can apply Lemma \ref{theorem:addingSmallWormsToT+A} and obtain
\[
T+ h_n(A) + r_n(A) \equiv_n T_n^{h_n(A)} + r_n(A).
\]
Recall that by our notation convention (Notation \ref{notation:TuringProgressionsIndexedByWorms}) and by the definition of $o_n$ (see Proposition \ref{theorem:wormCalculusForGLPworms}), we have that $T_n^{h_n(A)} + r_n(A)\equiv T_n^{A} + r_n(A)$.
\end{proof}

\subsection{Theories axiomatized by worms}
From Theorem \ref{theorem:wormsAndGeneralizedTuringProgressions} we see that we can capture the $\Pi_1^0$ consequences of the $o(A)$-th Turing Progression of $T$ by the simply axiomatized theory $T+A$. 

That is, $T+A$ proves the same $\Pi^0_1$ formulas as $T^0_{A}$. However, $T+A$ will in general prove many new formulas of higher complexity. We can characterize those consequences of $T+A$ also in terms of Turing progressions as we see in the next theorem.

\begin{theorem}\label{theorem:OmegaSequenceInTuringProgressions}
Let $T$ be some $\Pi_1^0$ axiomatizable theory. Let $A$ be any $\glp_\omega$ worm. We have, verifiably in $T$, that 
\begin{enumerate}
\item
$T + A \equiv \bigcup_{i< \omega} T_i^A$, and

\item
$T + A \equiv_n \bigcup_{i=0}^{n} T_i^A$.
\end{enumerate}
\end{theorem}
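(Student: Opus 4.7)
The plan is to deduce (1) from (2), then establish (2) by well-founded induction on the worm $A$ under the order $<_0$; inside $T$ this will be formalized by the reflexive transfinite induction of the preceding sections.

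The reduction $(2)\Rightarrow(1)$ is short. For $\bigcup_{i<\omega} T_i^A \subseteq T+A$, each axiom of $T_i^A$ beyond $T$ is a $\Pi_{i+1}$ statement, and by Theorem~\ref{theorem:wormsAndGeneralizedTuringProgressions} applied to the worm $h_i(A)\in\Worms_i$ it is a $\Pi_{i+1}$-consequence of $T+h_i(A)$, which is contained in $T+A$ via $\glp_\omega \vdash A\to h_i(A)$. Conversely, any theorem $\phi$ of $T+A$ has some complexity $\Pi_{n+1}$, and (2) at that $n$ already places it inside $\bigcup_{i\leq n}T_i^A$.

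For the induction that proves (2), the case $A=\top$ is trivial. Otherwise let $a_1$ be the leftmost modality of $A$, fix $n$, and set $k:=\min(n,a_1)$, ensuring $h_k(A)\neq\top$. Corollary~\ref{theorem:T+AIsAlmostATuringProgression} yields
\[
T+A \ \equiv_k \ T_k^A + r_k(A),
\]
and a short $\glp_\omega$-computation using the decomposition $A\leftrightarrow h_k(A)\wedge r_k(A)$ together with $\langle n_1\rangle\phi\to\langle 0\rangle\phi$ applied to the leading modality $n_1\geq k$ of $h_k(A)$ shows $r_k(A)<_0 A$ whenever $r_k(A)\neq\top$. The inductive hypothesis therefore applies to $r_k(A)$; invoking it at level $m:=$ leftmost modality of $r_k(A)$ (so $r_k(A)\in\Pi_{m+1}$) produces $\bigcup_{i\leq m}T_i^{r_k(A)}\vdash r_k(A)$. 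Combining with the monotonicity $T_i^{r_k(A)}\subseteq T_i^A$ for $i\leq k$, this upgrades to $\bigcup_{i\leq k}T_i^A\vdash T_k^A+r_k(A)$. If $n\leq a_1$ (so $k=n$) this is exactly the $\subseteq$ direction of (2) at level $n$. If $n>a_1$ (so $k=a_1$) we have in particular derived $A$ itself inside $\bigcup_{i\leq a_1}T_i^A\subseteq\bigcup_{i\leq n}T_i^A$, and any $\Pi_{n+1}$-consequence $\phi$ of $T+A$ then follows from $T\vdash A\to\phi$ since the union also contains $T$.

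The principal obstacle is the monotonicity lemma $T_i^{r_k(A)}\subseteq T_i^A$ for $i\leq k$, equivalently $h_i(r_k(A))\leq_i h_i(A)$ in $\Worms_i$ and thus $o_i(r_k(A))\leq o_i(A)$. When the leading modality of $r_k(A)$ is below $i$ the left side is $\top$ and the inequality is free; otherwise $h_i(A)$ decomposes as $h_k(A)\cdot h_i(r_k(A))$, and peeling off the leading modality $n_1\geq k\geq i$ of $h_k(A)$ gives $h_i(A)\vdash \langle i\rangle h_i(r_k(A))$ by the same worm calculus as above. I expect this to be the only step requiring some care; it is of the same routine flavor as the earlier decomposition arguments.
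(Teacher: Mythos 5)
Your proof is correct but takes a genuinely different route from the paper's. The paper proves item (2) by a clean \emph{external induction on} $n$: the base case is Theorem~\ref{theorem:wormsAndGeneralizedTuringProgressions}, and the inductive step uses Corollary~\ref{theorem:T+AIsAlmostATuringProgression} to write $T+A\equiv_{n+1}T_{n+1}^{h_{n+1}(A)}+r_{n+1}(A)$, then absorbs $r_{n+1}(A)$ (a $\Pi_{n+1}$ sentence) into $\bigcup_{i\leq n}T_i^{A}$ via the induction hypothesis and a complexity argument. No auxiliary lemmas about the individual progressions $T_i^{A}$ beyond Corollary~\ref{theorem:T+AIsAlmostATuringProgression} are needed. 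You instead induct on the worm $A$, reducing $A$ to its remainder $r_k(A)$, which forces you to prove the extra monotonicity fact $h_i(r_k(A))\leq_i h_i(A)$ (correct, and your sketch of it is sound) and to split into the cases $n\leq a_1$ and $n>a_1$. Your argument works, but it is heavier than the paper's, where the only recursion is on $n$ with $A$ held fixed. Two remarks to tighten it. First, since the reduction always passes from $A$ to the \emph{proper suffix} $r_k(A)$, you really have an induction on worm length; invoking $<_0$-well-foundedness and reflexive transfinite induction is overkill, and if you do insist on reflexive TFI you must bundle the statement as $\forall m\leq n\,\big(T+A\equiv_m\bigcup_{i\leq m}T_i^{A}\big)$ for a fixed external $n$, since your induction hypothesis is applied at the lower level $m$ rather than at $n$ — the plain fixed-$n$ formula would not hand you what you need. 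Second, in the case $n>a_1$ you in fact derive $\bigcup_{i\leq a_1}T_i^{A}\vdash A$, hence full containment of $T+A$; it is worth noting that for $i>a_1$ one has $h_i(A)=\top$ and so $T_i^{A}=T$, which is why nothing is lost by stopping the union at $a_1$. Overall: a correct proof, different decomposition, somewhat more machinery than the one in the paper.
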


\begin{proof}
It suffices to prove the second item in the form of $T + A \equiv_n \bigcup_{i=0}^{n} T_i^{h_i(A)}$ since for any worm $A$, we have $h_i(A) =\top$ for $i$ large enough. We prove the second item by an external induction on $n$ and the base case follows directly from Theorem \ref{theorem:wormsAndGeneralizedTuringProgressions}.

For the inductive case we reason in $T$ as follows. By Corollary \ref{theorem:T+AIsAlmostATuringProgression} we know that
\begin{equation}\label{IAmTiredOfTheseLengthyLables}
T + A \equiv_{n+1} T_{n+1}^{h_{n+1}(A)} + r_{n+1}(A).
\end{equation}
In particular, as $T_{n+1}^{h_{n+1}(A)} + r_{n+1}(A) \subseteq \Pi_{n+2}$ we see that actually, $T+A$ is a $\Pi_{n+2}$-conservative extension of $T_{n+1}^{h_{n+1}(A)} + r_{n+1}(A)$, that is,
\[
T + A \vdash T_{n+1}^{h_{n+1}(A)} + r_{n+1}(A).
\]
The induction hypothesis tells us that 
\begin{equation}\label{shortLabel}
T + A \equiv_n \bigcup_{i=0}^{n} T_i^{h_i(A)}.
\end{equation}
Again, since $\bigcup_{i=0}^{n} T_i^{h_i(A)} \subseteq \Pi_{n+1}$ we obtain that 
\[
T + A \vdash \bigcup_{i=0}^{n} T_i^{h_i(A)}.
\]
Thus, $T+A \vdash \bigcup_{i=0}^{n+1} T_i^{h_i(A)}$ and in particular, if $\bigcup_{i=0}^{n+1} T_i^{h_i(A)} \vdash \pi$ then $T+A \vdash \pi$ for $\pi \in \Pi_{n+2}$. 

Conversely, assume that $T+A \vdash \pi$ for some $\Pi_{n+2}$ sentence $\pi$. By \eqref{IAmTiredOfTheseLengthyLables} we see that $T_{n+1}^{h_{n+1}(A)} + r_{n+1}(A) \vdash \pi$. However, $r_{n+1}(A) \in \Pi_{n+1}$ and $T+A \vdash r_{n+1}(A)$ so, by \eqref{shortLabel} we see that $\bigcup_{i=0}^{n} T_i^{h_i(A)} \vdash r_{n+1}(A)$. Thus 
\[
\begin{array}{lll}
\bigcup_{i=0}^{n+1} T_i^{h_i(A)} & \vdash & T_{n+1}^{h_{n+1}(A)} + r_{n+1}(A) \\
\ & \vdash & \pi.
\end{array}
\]
as was required.

\end{proof}

It is clear that the modal reasoning in the proof of Theorem \ref{theorem:OmegaSequenceInTuringProgressions} can be extended beyond $\glp_\omega$. For this, one needs a (hyper-)arithmetic interpretation of $\glp_\Lambda$. One such example is given in \cite{FernandezJoosten:2013:OmegaRuleInterpretationGLP} but for $\lambda > \omega$ there are no canonical formula complexity classes around. This problem can be solved by considering a different interpretation of $\glp_\Lambda$ as presented in \cite{Joosten:2015:JumpsThroughProvability}.

As a nice corollary to Theorem \ref{theorem:OmegaSequenceInTuringProgressions} we get the following simple but useful lemma.

\begin{lemma}\label{theorem:HigherProgressionImpliesLowerOnesWormVersion}
Let $T$ be a $\Pi_{m+1}$ axiomatized theory. For $m\leq n$ and $A\in \Worms_n$, we have, verifiably in $T$, that $T_n^A  \vdash T_{m}^A$.
\end{lemma}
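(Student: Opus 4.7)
The plan is to get both $T_n^A$ and $T_m^A$ as conservative approximations of the simple theory $T+A$, using Theorem \ref{theorem:wormsAndGeneralizedTuringProgressions} applied at two different levels, and then exploit that the axioms of $T_m^A$ are of low complexity.

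First I would observe two easy facts that make both applications of Theorem \ref{theorem:wormsAndGeneralizedTuringProgressions} legitimate: since $m\leq n$ and $T$ is $\Pi_{m+1}$ axiomatized, $T$ is also (trivially) $\Pi_{n+1}$ axiomatized; and since every modality occurring in $A\in \Worms_n$ is $\geq n \geq m$, we have $A\in \Worms_m$ as well. Applying Theorem \ref{theorem:wormsAndGeneralizedTuringProgressions} twice inside $T$ yields, verifiably in $T$,
\[
T+A \ \equiv_n\ T_n^A \qquad \mbox{and} \qquad T+A \ \equiv_m\ T_m^A.
\]

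Next I would use that $\equiv_n$ refines $\equiv_m$ whenever $m\leq n$, since $\Pi_{m+1}\subseteq \Pi_{n+1}$. Hence the first equivalence already gives $T+A \equiv_m T_n^A$, and combining with the second we obtain $T_n^A \equiv_m T_m^A$, i.e.\ the two progressions prove the same $\Pi_{m+1}$ sentences.

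Finally I would close the gap between $\equiv_m$ and $\vdash$ by a complexity check on the axioms. The axioms of $T_m^A$ are either axioms of $T$ (which are $\Pi_{m+1}$ by assumption) or consistency statements of the form $\la m\ra_{T_m^\beta}\top$ for $\beta < o_m(A)$, which are $\Pi_{m+1}$ by our standing convention on the complexity of $\la m\ra_{\cdot}$. Since every axiom of $T_m^A$ is a $\Pi_{m+1}$-consequence of $T_m^A$, and $T_n^A\equiv_m T_m^A$, each such axiom is provable in $T_n^A$; therefore $T_n^A\vdash T_m^A$, as desired.

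There is no real obstacle here: the entire argument is a bookkeeping exercise on top of Theorem \ref{theorem:wormsAndGeneralizedTuringProgressions}. The only point requiring a moment of care is the observation that being $\Pi_{m+1}$-axiomatized is preserved along the Turing progression at level $m$ (so that $\equiv_m$-equivalence is enough to recover $\vdash$), which is why the hypothesis is stated at the lower level $m$ rather than at $n$.
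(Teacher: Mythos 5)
Your proof is correct. Applying Theorem~\ref{theorem:wormsAndGeneralizedTuringProgressions} at level $n$ is legitimate because $\Pi_{m+1}$-axiomatization is trivially $\Pi_{n+1}$-axiomatization and $\Worms_n\subseteq\Worms_m$, so both $T+A\equiv_n T_n^A$ and $T+A\equiv_m T_m^A$ hold; the step from $T_n^A\equiv_m T_m^A$ to $T_n^A\vdash T_m^A$ is then the expected complexity check on the axioms of $T_m^A$, all of which are $\Pi_{m+1}$ (the $T$-axioms by hypothesis, the iterated consistency statements $\la m\ra\top$ by the standing convention on the complexity of $\la m\ra$).

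The route you take is, however, not the one the paper points to. The paper presents this lemma ``as a nice corollary to Theorem~\ref{theorem:OmegaSequenceInTuringProgressions}'', i.e.\ one would first establish $T+A\vdash\bigcup_{i<\omega}T_i^A\vdash T_m^A$ and then transfer this down to $T_n^A$ via $T_n^A\equiv_n T+A$ and the complexity bound. Your argument bypasses Theorem~\ref{theorem:OmegaSequenceInTuringProgressions} altogether and works directly from the more primitive Theorem~\ref{theorem:wormsAndGeneralizedTuringProgressions}, applied at two levels. This is arguably cleaner: Theorem~\ref{theorem:OmegaSequenceInTuringProgressions} is formally stated only for $\Pi^0_1$-axiomatized base theories, so invoking it here for a $\Pi_{m+1}$-axiomatized $T$ would require a small reformulation, whereas your two applications of Theorem~\ref{theorem:wormsAndGeneralizedTuringProgressions} match the lemma's stated hypothesis exactly. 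Both routes ultimately hinge on the same observation that the axioms of $T_m^A$ are of low enough complexity for $\equiv_m$ (resp.~$\equiv_n$) to upgrade to $\vdash$.
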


The restriction on the complexity of $T$ can actually be dropped as was shown in \cite{Schmerl:1978:FineStructure, Beklemishev:1995:IteratedLocalReflectionVersusIteratedConsistency}.

\section{Ignatiev's model and Turing-Taylor expansions}

In this section we shall focus on sub-theories of Peano Arithmetic. We shall see that if such a theory can be written as the finite union of generalized Turing progressions, then it can be seen as ``an element" of a well-known model for modal logic.

\subsection{Ignatiev's universal model}

The closed fragment $\glp^0_\omega$ of $\glp_\omega$ is a rich yet decidable structure. Ignatiev exposed a Kripke model for $\glp^0_\omega$ that is universal in that the set of all formulas valid in all worlds of the model is exactly the set of theorems of $\glp^0_\omega$.

We refer to the standard literature (\cite{Ignatiev:1993:StrongProvabilityPredicates, Joosten:2004:InterpretabilityFormalized, BeklemishevJoostenVervoort:2005:FinitaryTreatmentGLP}) for details and limit ourselves here to defining the model and state its main properties.

Ignatiev's universal model $\mathcal U$ is a pair $\la \mathcal{I}_\omega, \{  \succ_i \}_{i\in \omega} \ra$ where $\mathcal{I}_\omega$ is a set of \emph{worlds} and for each $i\in \omega$ we have a binary relation $\succ_i$ on $\mathcal{I}_\omega$. Worlds in Ignatiev's model for $\glp_\omega$ are sequences of ordinals, 
\[
\la \alpha_0, \alpha_1, \alpha_2, \ldots \ra \mbox{ with $\alpha_{n+1}\leq\le(\alpha_n)$}
\]
where $\le (\alpha + \omega^\beta) = \beta$ and $\le(0)=0$. We define $\vec \alpha \succ_n \vec \beta$ to hold exactly when both $\alpha_i = \beta_i$ for all $i<n$ and $\alpha_n > \beta_n$. We have included a picture of a part of $\mathcal U$ in Figure \ref{fig:ignatiev}.

We recursively define $\vec \alpha \nVdash \bot$, $\vec \alpha \Vdash \phi \to \psi$ iff ($\vec \alpha \nVdash \phi$ or $\vec \alpha \Vdash \psi$), and $\vec \alpha \Vdash [n] \phi$ iff for all $\vec \beta$ with $\alpha \succ_n \beta$ we have $\vec \beta \Vdash \phi$, with the tacit understanding that the other connectives are defined as usual in terms of $\bot, \to$ and the $[n]$'s. Ignatiev's model $\mathcal U$ is universal in that $\glp^0_\omega \vdash \phi \ \ \Leftrightarrow \ \ \forall \, \vec \alpha {\in} \mathcal{I}_\omega\ \ \vec \alpha \Vdash \phi$.

In the light of Proposition \ref{theorem:wormCalculusForGLPworms}, we can represent each world $\vec \alpha \in \mathcal{I}_\omega$ by a (non-unique) sequence of worms $A_n\in \Worms_n$:
\[
\la A_0, A_1, A_2, \ldots \ra \mbox{ with $A_{n+1}\leq_{n+1}h(A_n)$ and $o_n(A_n) = \alpha_n$.}
\]
We often refer to these sequences $\vec A$ as \emph{Ignatiev sequences} with their condition on them that $A_{n+1}\leq_{n+1}h(A_n)$.

\subsection{Turing-Taylor expansions}

Since $\ea^+$ seems to be weakest theory for which we can properly formalize Turing progressions we will define the $\Pi^0_{n+1}$ ordinal a theory $U$ to be 
\[
|U|_{\Pi^0_n} \ := \ \sup \{ \alpha \mid (\ea^+)_n^\alpha \subseteq U \}.
\]
A main point of this paper is that collecting these ordinals per theory yields in interesting structure: Ignatiev's model $\mathcal U$. Therefore, we define the spectrum of a theory as expected.

\begin{definition}
For $U$ a formal arithmetic theory that lies in between $\ea^+$ and \pa, we define its \emph{spectrum} $tt(U)$ by
\[
tt(U)\ \ := \ \ \la |U|_{\Pi^0_1}, |U|_{\Pi^0_2}, |U|_{\Pi^0_3}, \ldots \ra.
\]
Suggestively, we shall also speak of the \emph{Turing-Taylor expansion} of $U$ instead of the spectrum of $U$. 
In case $U \equiv \bigcup_{n=0}^\infty (\ea^+)_n^{|U|_{\Pi^0_{n+1}}}$ we say that $U$ has a convergent Turing-Taylor expansion.
\end{definition}

We include the reference to Taylor in the name due to the analogy to Taylor expansions of $C^{\infty}$ functions, that is, functions that are infinitely many times differentiable (see acknowledgements). If $f$ is a $C^{\infty}$ function, one can consider its Taylor expansion around 0 as 
$f(x) = \sum_{n=0}^{\infty}a_n x^n$. Thus, each Taylor expansion is determined by by its sequence $\la a_0, a_1, a_2, \ldots \ra$ of coefficients. In the case of a convergent Turing-Taylor expansion we fully determine the expansion by a sequence of ordinals $\la \xi_0, \xi_1, \xi_2,\dots\ra$ so that 
\[
U \equiv \bigcup_{n=0}^\infty T_n^{\xi_n}.
\]
We shall study which sequences of ordinals are attainable as coming from a convergent Turing-Taylor expansion. 

Note that we have defined $tt(U)$ as to include only $\Pi_n^0$ sentences but this can easily be generalized to suitable sentences of higher complexities. For our current purpose, studying sub-theories of \pa, the restriction is not essential.

For Taylor expansions there is actually a uniform way of computing the coefficients as 
$f(x) = \sum_{n=0}^{\infty}\frac{f^{(n)}(0)}{n !}\  x^n$ where $f^{(n)}$ denotes the $n$-th derivative of $f$ and $f^{(0)} : = f$. For theories axiomatized by worms there we saw in Theorem \ref{theorem:OmegaSequenceInTuringProgressions} that there is also such a uniform way of computing the coefficients.

Note that the analogy to Taylor expansions is by no means perfect. In particular, in Taylor expansions we see that all the monomials $x^n$ are mutually independent, whereas in Turing progressions there will be certain dependency as we already saw in Lemma \ref{theorem:HigherProgressionImpliesLowerOnesWormVersion}. Therefore, we will rather speak of the spectrum or \emph{ordinal spectrum} of a theory instead of its Turing-Taylor expansion.
\bigskip

With every sequence $\vec \alpha = \la \alpha_0, \alpha_1, \ldots \ra$ of ordinals below $\varepsilon_0$ we can naturally associate a sub theory $(\vec \alpha)_{\sf tt}$ of \pa as follows
\[
(\vec \alpha)_{\sf tt} := \bigcup_{n=0}^{\infty} \ea_n^{\alpha_n}.
\]
Of course we can and shall write the $\alpha_n$ most of the times as worms $A_n$ in $\Worms_n$. In general, we do not have that $tt((\vec A)_{\sf tt}) = \vec A$. Let us first see this in a concrete example and then prove some general theorems in the next sections.

\begin{example}\label{example:TuringTaylerProjections}
For $\Pi^0_1$ axiomatized theories $T$ we have (in worm-notation) that $T^1_{1} + T_0^{01} \equiv T^1_1 + T_0^{101}$. In the classical notation system this reads $T_1^{1} + T_0^{\omega +1} \equiv T^1_1 + T_0^{\omega\cdot 2}$.
\end{example}

\begin{proof}
By Theorem \ref{theorem:OmegaSequenceInTuringProgressions} we have that $T^1_{1}\equiv T+\la 1 \ra \top$ and $T_0^{01} \equiv T+\la 0 \ra  \la 1 \ra \top$. Thus, $T^1_1 + T_0^{01} \equiv T + \la 1 \ra \top + \la 0 \ra \la 1\ra \top$. Clearly the latter is equivalent to $T + \la 1 \ra \la 0 \ra \la 1\ra \top$ and we obtain our result by one more application of Theorem \ref{theorem:OmegaSequenceInTuringProgressions}. 

Using Proposition \ref{theorem:wormCalculusForGLPworms} one gets the correspondence to the more familiar ordinal notation system.
\end{proof}

\subsection{Each Turing-Taylor expansion corresponds to a unique point in Ignatiev's model}

We shall now prove that for each theory $U$ we have that $tt(U)$ is a sequence that occurs in $\mathcal{I}_\omega$. Most of the work in doing so is included in the following theorem.

\begin{theorem}\label{theorem:EachTheoryIsApointInIgnatievsModel}
Let $T$ be some $\Pi_{n+1}$ axiomatized theory and let $A\in \Worms_{n+1}$ and $B\in \Worms_n$. We have, verifiably in $T$, that 
\[
T_{n+1}^A + T_n^{nB} \equiv_{n+1} T + A + nB, 
\]
and
\[
T_{n+1}^A + T_n^{nB} \equiv_n T_n^{AnB}. 
\]
\end{theorem}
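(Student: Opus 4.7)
The plan rests on a single worm $AnB$. Since $A\in\Worms_{n+1}\subseteq\Worms_n$ and $B\in\Worms_n$, the concatenation $AnB$ lies in $\Worms_n$, with decompositions $h_{n+1}(AnB)=A$, $r_{n+1}(AnB)=nB$, $h_n(AnB)=AnB$, and $r_n(AnB)=\top$. By the head-remainder decomposition lemma (giving $\glp_\omega\vdash AnB\leftrightarrow A\wedge nB$) and the assumed soundness of $\glp_\omega$ over $T$, we obtain $T+A+nB\equiv T+AnB$ as arithmetical theories. This observation does the heavy lifting for both halves of the statement.

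For the first identity ($\equiv_{n+1}$) I would proceed in two steps. First, Corollary \ref{theorem:T+AIsAlmostATuringProgression} at level $n+1$ applied to the worm $AnB$ yields $T+AnB\equiv_{n+1}T_{n+1}^A+nB$, so combining with the preceding paragraph gives $T+A+nB\equiv_{n+1}T_{n+1}^A+nB$. Second, I would upgrade this to $T_{n+1}^A+nB\equiv_{n+1}T_{n+1}^A+T_n^{nB}$. The $\supseteq$-direction is immediate: Theorem \ref{theorem:wormsAndGeneralizedTuringProgressions} gives $T_n^{nB}\equiv_n T+nB$, and since $nB\in\Pi_{n+1}$ this ensures $T_n^{nB}\vdash nB$. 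For the converse, suppose the right-hand side proves some $\pi\in\Pi_{n+2}$; compactness produces finitely many axioms $\tau_1,\dots,\tau_k\in\Pi_{n+1}$ of $T_n^{nB}$ whose conjunction $\tau$ satisfies $T_{n+1}^A\vdash\tau\to\pi$, and a second appeal to Theorem \ref{theorem:wormsAndGeneralizedTuringProgressions} gives $T+nB\vdash\tau$; modus ponens inside $T_{n+1}^A+nB$ then yields $\pi$.

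The second identity ($\equiv_n$) comes almost for free at this point: Corollary \ref{theorem:T+AIsAlmostATuringProgression} at level $n$ applied to $AnB$ gives $T+AnB\equiv_n T_n^{AnB}$, since $r_n(AnB)=\top$; combining with the first identity (which, being $\equiv_{n+1}$, a fortiori yields $\equiv_n$) and with $T+A+nB\equiv T+AnB$ closes the argument. The only real obstacle is the conservativity step in the previous paragraph: one must verify that the compactness-plus-modus-ponens argument is formalizable verifiably in $T$. This rests on the $\Delta_0$-axiomatization of $T_n^{nB}$ together with the standard formalization of cut-elimination made available by the totality of $\sf supexp$; no transfinite induction is needed on either side, since all the transfinite recursion is packaged inside the Turing progression symbols, and the modal bookkeeping is handled entirely by the two tools (Theorem \ref{theorem:wormsAndGeneralizedTuringProgressions} and Corollary \ref{theorem:T+AIsAlmostATuringProgression}) already forged earlier in the section.
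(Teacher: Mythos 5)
Your argument is correct, and it lands on the same intermediate quantities as the paper's own proof, but it distributes the modal bookkeeping differently. The paper first establishes the \emph{full} equivalence $T_n^{nB}\equiv T+nB$ by invoking Theorem \ref{theorem:OmegaSequenceInTuringProgressions} together with Lemma \ref{theorem:HigherProgressionImpliesLowerOnesWormVersion}, and then moves from $T_{n+1}^A+nB$ to $T+A+nB$ by a bare deduction-theorem calculation resting on the observation that $nB\to\pi$ stays in $\Pi_{n+2}^0$ when $\pi$ does, applying Theorem \ref{theorem:wormsAndGeneralizedTuringProgressions} once at level $n+1$. You instead route through Corollary \ref{theorem:T+AIsAlmostATuringProgression} applied at level $n+1$ to $AnB$ (this is fine: $T$ being $\Pi_{n+1}$-axiomatized is a fortiori $\Pi_{n+2}$-axiomatized, and the decompositions $h_{n+1}(AnB)=A$, $r_{n+1}(AnB)=nB$ are exactly right), and you replace the appeal to Theorem \ref{theorem:OmegaSequenceInTuringProgressions} by a compactness argument: since every axiom of $T_n^{nB}$ has complexity at most $\Pi_{n+1}$ and Theorem \ref{theorem:wormsAndGeneralizedTuringProgressions} yields $T+nB\equiv_n T_n^{nB}$, one gets $T+nB\vdash T_n^{nB}$ axiom by axiom. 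This side-steps the heavier Theorem \ref{theorem:OmegaSequenceInTuringProgressions} and makes transparent that it is precisely the complexity bound on the axioms of $T_n^{nB}$ that drives the absorption. One minor correction to your closing remark: the compactness-plus-modus-ponens step does not need cut-elimination — it is just formalizing that a proof uses finitely many axioms, straightforward given the $\Delta_0$-axiomatization; the cut-elimination cost in this circle of arguments is already paid inside Theorem \ref{theorem:wormsAndGeneralizedTuringProgressions}, on which both proofs lean.
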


\begin{proof}
Since $B\in \Worms_n$, by Theorem \ref{theorem:OmegaSequenceInTuringProgressions} and Lemma \ref{theorem:HigherProgressionImpliesLowerOnesWormVersion} we know that 
\[
T_n^{nB} \equiv T+nB.
\]
Consequently, we obtain the following equivalence.
\begin{equation}\label{equation:AddingFinitelyAxiomatizedTuringProgression}
T_{n+1}^A + T_n^{nB} \equiv T_{n+1}^A + nB
\end{equation}
Let us now see the following conservation result which proves the first part of the theorem.
\begin{equation}\label{equation:ConservationBetweenTwoTPsAndTwoWorms}
T_{n+1}^A + T_n^{nB} \equiv_{n+1} T+A +nB
\end{equation}
By \eqref{equation:AddingFinitelyAxiomatizedTuringProgression} it suffices to show that 
\[
T_{n+1}^A + nB \vdash \pi \ \ \Leftrightarrow \ \ T+A + nB \vdash \pi
\]
for any $\pi \in \Pi^0_{n+2}$. However, if $\pi \in \Pi^0_{n+2}$ we also have that $(nB \to \pi) \in \Pi^0_{n+2}$ since $nB \in \Pi^0_{n+1}$. Thus we can reason 
\[
\begin{array}{llll}
T_{n+1}^A + nB \vdash \pi & \Leftrightarrow & T_{n+1}^A \vdash  nB\to \pi& \\
& \Leftrightarrow & T+A \vdash  nB\to \pi& \mbox{by Theorem \ref{theorem:wormsAndGeneralizedTuringProgressions}}\\
& \Leftrightarrow & T+A + nB \vdash   \pi& \\
& \Leftrightarrow & T+AnB \vdash   \pi& \mbox{since $A\in \Worms_{n+1}$.}\\
\end{array}
\]
This proves \eqref{equation:ConservationBetweenTwoTPsAndTwoWorms} and also $T_{n+1}^A + T_n^{nB} \equiv_{n+1} T+AnB$. We readily obtain the second claim of our theorem since by Theorem \ref{theorem:wormsAndGeneralizedTuringProgressions} we have $T+AnB\equiv_n T_n^{AnB}$.
\end{proof}

\begin{corollary}\label{theorem:EachTtDefinesAUniquePoint}
If $U$ is some sub-theory of \pa with a convergent Turing-Taylor expansion, so that $U \not \equiv_0 \pa$, then $tt(U)$ defines a point in $\mathcal{I}_\omega$. 
\end{corollary}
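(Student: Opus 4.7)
The plan is to show that $tt(U) = \langle \alpha_0, \alpha_1, \ldots\rangle$ satisfies the defining Ignatiev condition $\alpha_{n+1} \leq \le(\alpha_n)$ for every $n$. The convergence hypothesis gives $U \equiv \bigcup_k (\mathrm{EA}^+)^{\alpha_k}_k$; for each $n$ I pick worm representatives $A_k \in \Worms_k$ with $o_k(A_k) = \alpha_k$, and decompose $A_n$ as $C \cdot r$ where $C := h_{n+1}(A_n) \in \Worms_{n+1}$ and $r := r_{n+1}(A_n)$ is either $\top$ or of the form $nB$ with $B \in \Worms_n$.

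The heart of the argument uses Theorem~\ref{theorem:EachTheoryIsApointInIgnatievsModel}. In the generic case $r = nB$ with both $C$ and $A_{n+1}$ non-empty, both $T^{A_{n+1}}_{n+1}$ and $T^{nB}_n$ sit inside $U$, so Theorem~\ref{theorem:EachTheoryIsApointInIgnatievsModel} delivers $T^{A_{n+1}}_{n+1} + T^{nB}_n \equiv_n T^{A_{n+1}nB}_n$, and since $\alpha_n = |U|_{\Pi^0_{n+1}}$ is a supremum, we read off $o_n(A_{n+1}nB) \leq o_n(A_n)$. Unpacking both sides using the $o_n$-formula $o_n(XnY) = o_n(Y) + 1 + \omega^{o_{n+1}(X)}$ (which follows from Proposition~\ref{theorem:wormCalculusForGLPworms} after the shift $A\mapsto A-n$ reduces the computation to the recursion for $o_0$), this becomes
\[
o_n(B) + 1 + \omega^{\alpha_{n+1}} \ \leq \ o_n(B) + 1 + \omega^{o_{n+1}(C)},
\]
and left-cancellation in ordinal arithmetic yields $\alpha_{n+1} \leq o_{n+1}(C) = o_{n+1}(h_{n+1}(A_n))$.

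The final step is to identify $\le(\alpha_n)$ with $o_{n+1}(h_{n+1}(A_n))$. Inspecting the expression $\alpha_n = o_n(B) + 1 + \omega^{o_{n+1}(C)}$ in Cantor normal form, the rightmost summand $\omega^{o_{n+1}(C)}$ absorbs the preceding $+1$ together with any exponents in the CNF of $o_n(B)$ that are strictly smaller than $o_{n+1}(C)$, so the least exponent of $\omega$ occurring in the CNF of $\alpha_n$ is precisely $o_{n+1}(C)$, which by the definition $\le(\alpha+\omega^\beta)=\beta$ is $\le(\alpha_n)$. Combining this with the previous step gives $\alpha_{n+1} \leq \le(\alpha_n)$, which is exactly the condition for $tt(U)$ to lie in $\mathcal{I}_\omega$. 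The boundary cases are straightforward: when $r = \top$, so $A_n \in \Worms_{n+1}$, Lemma~\ref{theorem:HigherProgressionImpliesLowerOnesWormVersion} applied to $T^{A_{n+1}}_{n+1} \vdash T^{A_{n+1}}_n$ gives $\omega^{\alpha_{n+1}} \leq \omega^{o_{n+1}(A_n)}$ directly; and when $C = \top$ the same cancellation argument forces $\alpha_{n+1} = 0$, trivialising the condition (and covering also $A_n = \top$). The main obstacle I anticipate is the careful ordinal bookkeeping -- specifically the $o_n$-formula at a split worm and the CNF identification $\le(\alpha_n) = o_{n+1}(h_{n+1}(A_n))$; once these are in hand, the application of Theorem~\ref{theorem:EachTheoryIsApointInIgnatievsModel} is entirely formal.
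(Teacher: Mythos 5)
Your proof is correct and hinges on the same engine as the paper's own proof: Theorem~\ref{theorem:EachTheoryIsApointInIgnatievsModel} combined with the fact that $\alpha_n=|U|_{\Pi^0_{n+1}}$ is a supremum. The difference is one of presentation. The paper argues by contradiction and stays entirely at the worm level: assuming $A_{n+1}>_{n+1}h_{n+1}(A_n)$, it derives $A_{n+1}r_{n+1}(A_n)>_n A_n$ and concludes via Theorem~\ref{theorem:EachTheoryIsApointInIgnatievsModel} that $T_n^{A_{n+1}r_{n+1}(A_n)}\subseteq U$, contradicting maximality; the final translation from the worm condition $A_{n+1}\leq_{n+1}h_{n+1}(A_n)$ to the ordinal condition $\alpha_{n+1}\leq\le(\alpha_n)$ defining $\mathcal I_\omega$ is left implicit, absorbed into the paper's earlier remark that Ignatiev worlds can be represented by worm sequences with exactly that condition. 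You instead argue directly at the ordinal level and make that translation explicit: the step $o_n(CnB)=o_n(B)+\omega^{o_{n+1}(C)}$ (your $+1$ gets absorbed anyway) together with the CNF identification $\le(\alpha_n)=o_{n+1}(h_{n+1}(A_n))$ is precisely the bookkeeping the paper delegates to Proposition~\ref{theorem:wormCalculusForGLPworms} and its background references. What you buy with this is a more self-contained argument tied directly to the definition of $\mathcal I_\omega$; what the paper buys by staying in worm-land is brevity and no ordinal-arithmetic casework. Your explicit treatment of the boundary cases $r=\top$ (via Lemma~\ref{theorem:HigherProgressionImpliesLowerOnesWormVersion}) and $C=\top$ also correctly fills in cases that the paper handles somewhat tacitly, since Theorem~\ref{theorem:EachTheoryIsApointInIgnatievsModel} as stated only covers remainders of the form $nB$.
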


\begin{proof}
Since $U$ has a convergent Turing-Taylor expansion, $|U|_{\Pi^0_1}$ is well-defined. Since it is well-known that $(\ea^+)^0_{\varepsilon_0} \equiv_0 \pa$, by the assumption that $U \not \equiv_0 \pa$ we know that we can find a $\glp_\omega$ worm $A$ with $(\ea^+)_0^A \equiv_0 U$ whence $|U|_{\Pi^0_1} = A$. 

By Lemma \ref{theorem:HigherProgressionImpliesLowerOnesWormVersion} we see that each $|U|_{\Pi^0_n}<\varepsilon_0$. Thus, indeed, $tt(U)$ defines a sequence $\vec A$ of worms. W.l.o.g.~we pick $\vec A$ so that $A_n \in \worms_n$ for each $n$.

Suppose now for a contradiction that for some $n$, the sequence $\vec A$ does not satisfy the condition that $A_{n+1}\leq_{n+1} h_{n+1}(A_n)$. As provably $A_n\leftrightarrow h_{n+1}(A_n)r_{n+1}(A_n)$, clearly, $A_n\geq_n r_{n+1}(A_n)$ whence $T_n^{A_n} \vdash T_n^{r_{n+1}(A_n)}$. By Theorem \ref{theorem:EachTheoryIsApointInIgnatievsModel} we know that $T_{n+1}^{A_{n+1}} + T_n^{r_{n+1}(A_n)}\vdash T_n^{A_{n+1}r_{n+1}(A_n)}$. But, since by assumption $A_{n+1}>_{n+1} h_{n+1}(A_n)$ we know that $A_{n+1}r_{n+1}(A_n) >_n A_n$. The latter violates the assumption that $|U|_{\Pi_n^0} = A_n$ is the supremum of all $B$ so that $(\ea^+)_n^B \subseteq U$.
\end{proof}

\subsection{Each point in Ignatiev's model corresponds to a unique Turing-Taylor expansion}

Corollary \ref{theorem:EachTtDefinesAUniquePoint} tells us that certain points in the Ignatiev model $\mathcal{I}_\omega$ can be seen as mathematical theories with a convergent Turing-Taylor expansion.
We now wish to see that \emph{every} point $\vec A$ in the Ignatiev model $\mathcal{I}_\omega$ can be interpreted naturally as a theory. 

The natural candidate would of course be the theory $(\vec A)_{\sf tt}$. But, we have already seen in Example \ref{example:TuringTaylerProjections} that in general we do not have
$tt((\vec A)_{\sf tt})=\vec A$. However, as we shall see, for points in the Ignatiev model the equality does hold. 

We will need two technical lemmas to deal with the adjacent points in $\vec A$. The first, Lemma \ref{theorem:IgnatievConditionViolated} deals with the case that these adjacent points violate the condition $A_{n+1}\leq_{n+1}h(A_n)$ of Ignatiev sequences. The second case, Lemma \ref{theorem:ConservationIgnatievStep} deals with the case when no such violation is there.

\begin{lemma}\label{theorem:IgnatievConditionViolated}
Let $T$ be a $\Pi_{n+1}$ axiomatized theory. Moreover, let $A\in \Worms_{n+1}$, $B\in \Worms_n$ and suppose $A\geq_{n+1} h_{n+1}(B)$. Then, verifiably in $T$, we have
\[
T_{n+1}^A + T_n^B \equiv_n T_n^{Ar_{n+1}(B)}.
\]
\end{lemma}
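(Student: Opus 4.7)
The plan is to decompose $B = h_{n+1}(B)\cdot r_{n+1}(B)$, apply Theorem \ref{theorem:EachTheoryIsApointInIgnatievsModel} twice, and use the hypothesis $A\geq_{n+1} h_{n+1}(B)$ to absorb $T_{n+1}^{h_{n+1}(B)}$ into the larger progression $T_{n+1}^A$. A crucial auxiliary observation is that every axiom of $T_n^B$ has the form $\la n\ra_{T_n^\beta}\top\in \Pi_{n+1}$, so from any $U\equiv_n T_n^B$ we in fact obtain $U\vdash T_n^B$. This lets us sidestep the point --- illustrated by the footnote on $\PRA+\neg\isig{1}$ and \isig{1} --- that $\equiv_n$ need not survive the addition of an arbitrary theory.

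In the main case $r_{n+1}(B) = nC$ with $C\in \Worms_n$, Theorem \ref{theorem:EachTheoryIsApointInIgnatievsModel} applied to the pairs $(A,C)$ and $(h_{n+1}(B),C)$ yields
\[
T_{n+1}^A + T_n^{nC} \equiv_n T_n^{AnC}
\quad\text{and}\quad
T_{n+1}^{h_{n+1}(B)} + T_n^{nC} \equiv_n T_n^{B}.
\]
By the observation above, the second $\equiv_n$ upgrades to $T_{n+1}^{h_{n+1}(B)} + T_n^{nC}\vdash T_n^B$; conversely, since $B\geq_n nC$, monotonicity of Turing progressions gives $T_n^B\vdash T_n^{nC}$. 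Now fix $\pi\in \Pi_{n+1}$. If $T_n^{AnC}\vdash \pi$, the first conservation yields $T_{n+1}^A + T_n^{nC}\vdash \pi$, and hence $T_{n+1}^A + T_n^B\vdash \pi$. Conversely, if $T_{n+1}^A + T_n^B\vdash \pi$, then $T_{n+1}^A + T_{n+1}^{h_{n+1}(B)} + T_n^{nC}\vdash \pi$; collapsing $T_{n+1}^{h_{n+1}(B)}$ into $T_{n+1}^A$ via $A\geq_{n+1}h_{n+1}(B)$ and reading the first conservation backwards produces $T_n^{AnC}\vdash \pi$.

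The degenerate case $r_{n+1}(B)=\top$ is immediate: here $B\in \Worms_{n+1}$ with $A\geq_{n+1}B$, so $T_{n+1}^A\vdash T_{n+1}^B\vdash T_n^B$ (the second step via Lemma \ref{theorem:HigherProgressionImpliesLowerOnesWormVersion}), and the claim reduces to $T_{n+1}^A \equiv_n T_n^A$, which is two applications of Theorem \ref{theorem:wormsAndGeneralizedTuringProgressions} since $A\in \Worms_{n+1}\subseteq \Worms_n$. The only genuine obstacle is the $\equiv_n$-vs-addition issue highlighted above; once the $\Pi_{n+1}$-axiomatizability of $T_n^B$ is put to work, every step remains verifiable in $T$ because each cited result is.
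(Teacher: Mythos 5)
Your proof is correct, but it routes the hard (``left-to-right'') direction differently from the paper. Both arguments treat the easy direction identically: since $B\geq_n r_{n+1}(B)$ one has $T_n^B\vdash T_n^{r_{n+1}(B)}$, and a single application of Theorem~\ref{theorem:EachTheoryIsApointInIgnatievsModel} then shows that everything $\Pi_{n+1}$ provable from $T_n^{Ar_{n+1}(B)}$ is provable from $T_{n+1}^A+T_n^B$. For the converse, the paper \emph{strengthens} $T_n^B$ to $T_n^{Ar_{n+1}(B)}$, using the hypothesis via $Ar_{n+1}(B)\geq_n h_{n+1}(B)r_{n+1}(B)=B$, and then chains through the worm-axiomatized theory $T+Ar_{n+1}(B)$ with $\Pi_{n+2}$-level conservation results (Theorem~\ref{theorem:EachTheoryIsApointInIgnatievsModel} and Theorem~\ref{theorem:OmegaSequenceInTuringProgressions}). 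You instead \emph{decompose} $T_n^B$: the second application of Theorem~\ref{theorem:EachTheoryIsApointInIgnatievsModel} to the pair $(h_{n+1}(B),C)$, combined with the observation that $\Pi_{n+1}$-axiomatizability of $T_n^B$ upgrades $\equiv_n$ to a genuine inclusion, lets you replace $T_n^B$ by $T_{n+1}^{h_{n+1}(B)}+T_n^{nC}$, whence $T_{n+1}^{h_{n+1}(B)}$ is absorbed into $T_{n+1}^A$ using the hypothesis directly. This keeps the whole argument at the $\Pi_{n+1}$ level, using only the second half of Theorem~\ref{theorem:EachTheoryIsApointInIgnatievsModel}, at the cost of the extra ``upgrading'' observation (which the paper makes elsewhere, in the proof of Theorem~\ref{theorem:OmegaSequenceInTuringProgressions}, but not here). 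You also explicitly split off the degenerate case $r_{n+1}(B)=\top$, which the paper glosses over; your treatment of it via Lemma~\ref{theorem:HigherProgressionImpliesLowerOnesWormVersion} and two applications of Theorem~\ref{theorem:wormsAndGeneralizedTuringProgressions} is correct and a tidy addition.
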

\begin{proof}
We reason in $T$. Since clearly $B\geq_{n}r_{n+1}(B)$ we have that $T_n^{B} \vdash T_n^{r_{n+1}(B)}$. Consequently, 
\[
\begin{array}{llll}
T_{n+1}^A + T_n^B & \vdash & T_{n+1}^A + T_n^{r_{n+1}B} & \\
 & \equiv_n & T_n^{Ar_{n+1}(B)}& \mbox{by Theorem \ref{theorem:EachTheoryIsApointInIgnatievsModel}.}\\
\end{array}
\]
Let $\pi$ be some $\Pi^0_{n+1}$ sentence.
We have thus seen that if $T_n^{Ar_{n+1}(B)}\vdash \pi$, then 
$T_{n+1}^A + T_n^B\vdash \pi$.

For the other direction, suppose that $T_{n+1}^A + T_n^B\vdash \pi$ for some $\pi \in \Pi^0_{n+1}$. We wish to see that $T_n^{Ar_{n+1}(B)}\vdash \pi$. We start with an application of Theorem \ref{theorem:EachTheoryIsApointInIgnatievsModel} and see:
\[
\begin{array}{llll}
T_{n+1}^A + T_n^{r_{n+1}(B)} &\equiv_{n+1} &T + A + {r_{n+1}(B)} & \\
	& \equiv_{n+1} & T + Ar_{n+1}(B) & \ \ \ \ \ \ \ \ \ \ \ \ \ \ \ \ \ \ \ \ \ \ \ \ \ \ \ (*)\\
	& \equiv_{n+1} & T_{n+1}^A + T_n^{Ar_{n+1}(B)}& \mbox{by Theorem \ref{theorem:OmegaSequenceInTuringProgressions}.}\\
\end{array}
\]
As $A \geq_{n+1}h_{n+1}(B)$, we see that $Ar_{n+1}(B)\geq_n h_{n+1}(B)r_{n+1}(B)$, whence $Ar_{n+1}(B)\geq_n  B$. Consequently, $T_n^{Ar_{n+1}(B)} \vdash T_n^B$. Thus, if $T_{n+1}^A + T_n^B \vdash \pi$, then also $T_{n+1}^A + T_n^{Ar_{n+1}(B)} \vdash \pi$ and by $(*)$, we see $T+ Ar_{n+1}(B) \vdash \pi$. Since $\pi \in \Pi^0_{n+1}$ we get by one more application of Theorem \ref{theorem:OmegaSequenceInTuringProgressions} that $T_n^{Ar_{n+1}(B)} \vdash \pi$ as was required.
\end{proof}
We note that the assumption $A\geq_{n+1} h_{n+1}B$ does not give us any information about the relations $\geq_{n+m}$ with $m>1$ at different coordinates in the Ignatiev sequence as these signs can switch arbitrarily. For example, let $A = 220222$ and $B=2122$. We have that 
\[
\begin{array}{lll}
A & >_0 & B\\
h_1(A) & <_1 & h_1(B)\\
h_2(A) & >_2 & h_2(B).\\
\end{array}
\]
The next lemma takes care of the case $A\leq_{n+1} h_{n+1}(B)$.
\begin{lemma}\label{theorem:ConservationIgnatievStep}
Let $T$ be some $\Pi_{n+1}$-axiomatized theory. Moreover, let $A\in \Worms_{n+1}$, $B\in \Worms_n$ and suppose $A\leq_{n+1} h_{n+1}(B)$. Then, verifiably in $T$, we have 
\[
T_{n+1}^A + T_n^B \equiv_n T_n^{B}.
\]
\end{lemma}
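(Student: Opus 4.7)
The plan is to prove the non-trivial direction of $\equiv_n$: every $\Pi_{n+1}$ consequence of $T_{n+1}^A + T_n^B$ is already a consequence of $T_n^B$. The reverse direction is immediate since $T_n^B \subseteq T_{n+1}^A + T_n^B$. The strategy is to squeeze $T_{n+1}^A + T_n^B$ between $T_n^B$ and a single stronger theory, namely $T + B$, which will turn out to actually prove both $T_n^B$ and $T_{n+1}^A$, while still being $\Pi_{n+1}$-conservative over $T_n^B$.

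First, I would argue that $T + B \vdash T_n^B$. By Theorem \ref{theorem:wormsAndGeneralizedTuringProgressions} we have $T + B \equiv_n T_n^B$, and the axioms of $T_n^B$ are of complexity $\Pi_{n+1}$ (each being a consistency statement of the form $\langle n \rangle \top$ on top of a $\Pi_{n+1}$-axiomatized theory); hence each such axiom lies in the $\Pi_{n+1}$-theory shared by $T+B$ and $T_n^B$, which yields the claimed inclusion. Next I would show $T + B \vdash T_{n+1}^A$. By Theorem \ref{theorem:OmegaSequenceInTuringProgressions}, $T + B \equiv_{n+1} \bigcup_{i=0}^{n+1} T_i^{h_i(B)}$; in particular every $\Pi_{n+2}$ axiom of $T_{n+1}^{h_{n+1}(B)}$ is derivable in $T + B$. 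The hypothesis $A \leq_{n+1} h_{n+1}(B)$ translates into $o_{n+1}(A) \leq o_{n+1}(h_{n+1}(B))$ by Proposition \ref{theorem:wormCalculusForGLPworms}, so by monotonicity of Turing progressions in their ordinal parameter we obtain $T_{n+1}^{h_{n+1}(B)} \vdash T_{n+1}^A$. Chaining these gives $T + B \vdash T_{n+1}^A$.

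Combining the two inclusions, $T + B \vdash T_{n+1}^A + T_n^B$. Therefore, for any $\pi \in \Pi_{n+1}$ with $T_{n+1}^A + T_n^B \vdash \pi$, we have $T + B \vdash \pi$, and invoking $T + B \equiv_n T_n^B$ once more delivers $T_n^B \vdash \pi$, as required. Each step uses a result from the paper whose conclusion already carries the qualifier \emph{verifiably in $T$}, so the whole argument formalizes inside $T$ without additional effort.

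The main obstacle, such as it is, is the book-keeping of complexity: converting conservativity statements $U \equiv_n V$ into actual inclusions $U \vdash V$ requires the axioms of $V$ to have complexity at most $\Pi_{n+1}$, and the monotonicity step for progressions indexed by $\leq_{n+1}$ rests on the provable soundness of $\glp_\omega$ for $T$, which is one of our standing assumptions. Once these points are tracked carefully no further reflexive transfinite induction is needed, since it is already packaged inside Theorems \ref{theorem:wormsAndGeneralizedTuringProgressions} and \ref{theorem:OmegaSequenceInTuringProgressions}.
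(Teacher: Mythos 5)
Your overall strategy coincides with the paper's: both proofs squeeze $T_{n+1}^A + T_n^B$ between $T_n^B$ and the single theory $T+B$, use the hypothesis $A\leq_{n+1}h_{n+1}(B)$ to get $T_{n+1}^{h_{n+1}(B)}\vdash T_{n+1}^A$, and finish with $T+B\equiv_n T_n^B$. The only difference is which auxiliary result you invoke to establish $T+B\vdash T_{n+1}^{h_{n+1}(B)}$; the paper routes through Theorem~\ref{theorem:EachTheoryIsApointInIgnatievsModel}.

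Here, however, your citation is off. Theorem~\ref{theorem:OmegaSequenceInTuringProgressions} is stated only for $\Pi_1^0$-axiomatized $T$, while the lemma's $T$ is $\Pi_{n+1}$-axiomatized. The statement $T+B\equiv_{n+1}\bigcup_{i=0}^{n+1}T_i^{h_i(B)}$ does not even make good sense in that generality, since the low-level progressions $T_i^{h_i(B)}$ for $i\leq n$ are built from a base theory whose axioms are too complex for Theorem~\ref{theorem:wormsAndGeneralizedTuringProgressions} to control at level $i$. What you actually need is only the top layer, $T+B\vdash T_{n+1}^{h_{n+1}(B)}$, and this does go through: apply Theorem~\ref{theorem:wormsAndGeneralizedTuringProgressions} at level $n+1$ (permissible, since $\Pi_{n+1}\subseteq\Pi_{n+2}$ and $h_{n+1}(B)\in\Worms_{n+1}$) to get $T+h_{n+1}(B)\equiv_{n+1}T_{n+1}^{h_{n+1}(B)}$, note the right-hand side has axioms of complexity $\Pi_{n+2}$, and combine with $T+B\vdash h_{n+1}(B)$. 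Alternatively, cite Theorem~\ref{theorem:EachTheoryIsApointInIgnatievsModel} as the paper does, which is formulated precisely for $\Pi_{n+1}$-axiomatized theories. With that repair the argument is correct and essentially identical to the paper's.
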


\begin{proof}
One direction is immediate so we reason in $T$ and assume that $T_{n+1}^A + T_n^B \vdash \pi$ for some $\pi\in\Pi^0_{n+1}$; we set out to prove that $T_n^B\vdash \pi$. However, by assumption $A\leq_{n+1}h_{n+1}(B)$ so that 
\begin{equation}\label{equation:bigTPImpliesSmallerTP}
T_{n+1}^{h_{n+1}(B)} \vdash T_{n+1}^A.
\end{equation}
Using this, we obtain
\[
\begin{array}{llll}
T+ B &\equiv& T + h_{n+1}(B) + r_{n+1}(B) & \\
 & \equiv_{n+1} & T_{n+1}^{h_{n+1}(B)} + T_n^{r_{n+1}(B)}& \mbox{by Theorem \ref{theorem:EachTheoryIsApointInIgnatievsModel}}\\
 & \equiv_{n+1} & T_{n+1}^{h_{n+1}(B)} + T_n^{B}& \mbox{by Theorem \ref{theorem:EachTheoryIsApointInIgnatievsModel}}\\
 & \vdash & T_{n+1}^A + T_n^B & \mbox{by \eqref{equation:bigTPImpliesSmallerTP}}.
\end{array}
\]
On the other hand, $T+B \equiv_n T_n^B$ so that if $T_{n+1}^A + T_n^B\vdash \pi$, then $T+B \vdash \pi$ whence also $T_n^B \vdash \pi$ quot erat demonstrandum.
\end{proof}

Suppose that $U\equiv_n V$. As mentioned before, in general we do not have that $U+T\equiv_nV+T$. However, we do have the following easy but useful lemma.

\begin{lemma}\label{theorem:ConservativeExtensions}
(In $\ea^+$) Suppose $U\equiv_nV$ and $T\subseteq \Sigma_{n+1}$, then also $U + T\equiv_n V+T$.
\end{lemma}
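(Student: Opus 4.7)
The plan is to establish both directions by a symmetric argument based on compactness and on the closure properties of the arithmetical hierarchy. Since $U \equiv_n V$ is symmetric in $U$ and $V$, it suffices to show one direction: if $\pi \in \Pi_{n+1}$ and $U + T \vdash \pi$, then $V + T \vdash \pi$.

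First I would apply compactness to $U + T \vdash \pi$ to obtain a finite list $\sigma_1, \ldots, \sigma_k$ of axioms from $T$ with $U \vdash \sigma_1 \wedge \cdots \wedge \sigma_k \to \pi$. Let $\sigma := \sigma_1 \wedge \cdots \wedge \sigma_k$. Since each $\sigma_i \in \Sigma_{n+1}$ and the class $\Sigma_{n+1}$ is closed (up to provable equivalence in $\ea^+$) under conjunction by prefixing existential quantifier blocks, we have $\sigma \in \Sigma_{n+1}$ up to provable equivalence.

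The key observation is then a complexity calculation: the formula $\sigma \to \pi$, which is $\neg \sigma \vee \pi$, is provably equivalent in $\ea^+$ to a $\Pi_{n+1}$ formula. Indeed, $\neg \sigma \in \Pi_{n+1}$, and the disjunction of two $\Pi_{n+1}$ formulas is transformed into a $\Pi_{n+1}$ formula by renaming variables and pulling the universal prefixes out front. Hence $U \vdash (\sigma \to \pi)$ with $\sigma \to \pi \in \Pi_{n+1}$. Now the hypothesis $U \equiv_n V$ applies, giving $V \vdash \sigma \to \pi$. Re-introducing the premises $\sigma_1, \ldots, \sigma_k$ from $T$ yields $V + T \vdash \pi$, completing the argument.

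The whole proof is short and the only non-routine point is the complexity bookkeeping showing $\sigma \to \pi \in \Pi_{n+1}$; this is the standard arithmetical-hierarchy manipulation and should be attributed to the closure properties of $\Pi_{n+1}$ in $\ea^+$ without detailed calculation. No transfinite induction or modal machinery is needed: this is purely a matter of moving finitely many $\Sigma_{n+1}$ hypotheses from the axiom list into the consequence via the deduction theorem, and verifying that the resulting implication remains in $\Pi_{n+1}$.
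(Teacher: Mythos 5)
Your proof is correct and is essentially a spelled-out version of the paper's own one-line proof, which just says ``Immediate from the (formalized) deduction theorem.'' The compactness step, the move of the finite conjunction $\sigma \in \Sigma_{n+1}$ across the turnstile, and the observation that $\sigma \to \pi$ remains $\Pi_{n+1}$ are exactly the content of that appeal to the deduction theorem; the only thing you leave implicit is that, since the lemma is asserted \emph{in} $\ea^+$, one uses the formalized deduction theorem so the whole argument goes through inside the base theory.
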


\begin{proof}
Immediate from the (formalized) deduction theorem.
\end{proof}

%

\begin{lemma}
Let $T$ be a $\Pi_1$ axiomatized theory and let $\vec A \in \mathcal{I}_\omega$. We have, verifiably in $T$, that  $\bigcup_{i=0}^{n}T_i^{A_i}\equiv_m \bigcup_{i=0}^{m}T_i^{A_i}$ for $m\leq n$.
\end{lemma}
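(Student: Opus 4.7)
The plan is to reduce the general claim to a one-step ``telescoping'' reduction and then chain these reductions. Concretely, I aim first to prove that for every $n \geq 1$
\[
\bigcup_{i=0}^{n} T_i^{A_i} \ \equiv_{n-1} \ \bigcup_{i=0}^{n-1} T_i^{A_i},
\]
and then to chain these for $n, n-1, \ldots, m+1$. Since $\equiv_k$ implies $\equiv_m$ whenever $m \leq k$, iterating the one-step equivalence gives
\[
\bigcup_{i=0}^{n} T_i^{A_i} \ \equiv_{n-1} \ \bigcup_{i=0}^{n-1} T_i^{A_i} \ \equiv_{n-2} \ \cdots \ \equiv_m \ \bigcup_{i=0}^{m} T_i^{A_i},
\]
and passing to the weakest (that is $\equiv_m$) throughout yields the desired statement.

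For the one-step reduction I will apply Lemma \ref{theorem:ConservationIgnatievStep} at level $n-1$. The hypothesis $A_n \leq_n h_n(A_{n-1})$ is exactly the Ignatiev condition for $\vec A \in \mathcal{I}_\omega$, and $T$ is $\Pi_n$-axiomatized since it is $\Pi_1$-axiomatized. Thus verifiably in $T$,
\[
T_n^{A_n} + T_{n-1}^{A_{n-1}} \ \equiv_{n-1} \ T_{n-1}^{A_{n-1}}.
\]
I then use Lemma \ref{theorem:ConservativeExtensions} to add $\bigcup_{i=0}^{n-2} T_i^{A_i}$ to both sides while preserving $\equiv_{n-1}$. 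This is legitimate because every axiom of $T_i^{A_i}$ for $i \leq n-2$ is either an axiom of $T$ (which is $\Pi_1$) or an iterated consistency statement $\la i\ra_{\cdot}\top$ of complexity $\Pi_{i+1} \subseteq \Pi_{n-1}$; altogether the added axioms sit in $\Sigma_n$, exactly the complexity class required by Lemma \ref{theorem:ConservativeExtensions} to preserve $\equiv_{n-1}$. This yields the telescoping step, and the base case $n = 1$ is just Lemma \ref{theorem:ConservationIgnatievStep} itself with nothing to add.

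The main obstacle, such as it is, lies in keeping track of the complexity bookkeeping to justify the appeal to Lemma \ref{theorem:ConservativeExtensions}: one must verify that shifting the equivalence down one level (from $\equiv_{n-1}$ in Lemma \ref{theorem:ConservationIgnatievStep} to the added theory being in $\Sigma_n$) matches up correctly with Lemma \ref{theorem:ConservativeExtensions}, whose statement reads $T \subseteq \Sigma_{k+1}$ together with $U \equiv_k V$ giving $U + T \equiv_k V + T$. Apart from this index tracking, all steps are uniform in $T$ and proceed verifiably inside $T$, so the ``verifiably in $T$'' conclusion comes along at no extra cost.
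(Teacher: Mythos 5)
Your proof is correct and is exactly what the paper's terse one-line proof ("by induction using Lemmas \ref{theorem:ConservationIgnatievStep} and \ref{theorem:ConservativeExtensions}") has in mind: peel off the top level with Lemma \ref{theorem:ConservationIgnatievStep}, add back the lower levels with Lemma \ref{theorem:ConservativeExtensions}, and telescope. The complexity bookkeeping you flag as the main obstacle is handled correctly, so nothing further is needed.
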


\begin{proof}
By induction using lemmata \ref{theorem:ConservationIgnatievStep} and \ref{theorem:ConservativeExtensions}.
\end{proof}

\begin{lemma}\label{theorem:IgnatievSequencesStableUnderIgnatievOperator}
Let $\vec A \in \mathcal{I}_\omega$. If $\bigcup_{i=0}^{n}T_i^{A_i} \vdash T_n^B$, then $B\leq_n A_n$ given that $T$ is consistent.
\end{lemma}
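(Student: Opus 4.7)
The plan is to relativise the Turing progressions to the enlarged base theory $U := \bigcup_{i=0}^{n-1} T_i^{A_i}$ (taking $U = T$ when $n=0$) and then invoke G\"odel's second incompleteness theorem at level $n$ for this enriched base.

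First I observe that $U$ is an extension of $T$ whose axioms have complexity at most $\Pi_n \subseteq \Sigma_{n+1}$. Lemma \ref{theorem:GeneralizedTuringProgressionsInvariantToSmallChangesBaseTheory} is stated only for adding a single $\Sigma_{n+1}$ sentence, but the reflexive transfinite induction in its proof goes through verbatim for any c.e.\ collection of $\Sigma_{n+1}$ axioms; the only ingredient is provable $\Sigma_{n+1}$-completeness applied axiom by axiom. This yields, for every worm $C \in \Worms_n$, the equivalence $(U)_n^{C} \equiv T_n^{C} + U$, where $(U)_n^{C}$ denotes the Turing progression at level $n$ with $U$ as the base theory.

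Applying this with $C = A_n$ gives $(U)_n^{A_n} \equiv T_n^{A_n} + U = \bigcup_{i=0}^{n} T_i^{A_i}$, and with $C = B$ gives $(U)_n^{B} \equiv T_n^{B} + U$. Since the hypothesis delivers $\bigcup_{i=0}^{n} T_i^{A_i} \vdash T_n^{B}$ and $U$ is a subtheory of the left-hand side, the same theory also proves $T_n^{B} + U$, so translating through the equivalences we obtain $(U)_n^{A_n} \vdash (U)_n^{B}$.

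Now suppose, for contradiction, that $B >_n A_n$. Then $\langle n\rangle_{(U)_n^{A_n}}\top$ appears among the axioms of $(U)_n^{B}$, and so $(U)_n^{A_n}$ proves its own $n$-consistency, contradicting G\"odel's second theorem at level $n$ for the consistent theory $(U)_n^{A_n}$. The main obstacle is twofold: (i) lifting Lemma \ref{theorem:GeneralizedTuringProgressionsInvariantToSmallChangesBaseTheory} from a single $\Sigma_{n+1}$ sentence to the c.e.\ $\Sigma_{n+1}$-axiomatisation $U$, and (ii) verifying the $n$-consistency of $(U)_n^{A_n}$ needed to apply the incompleteness theorem, which follows routinely from the assumed consistency of $T$ together with the paper's standing soundness hypotheses. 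Notably, this argument never invokes the Ignatiev condition on $\vec A$, so the conclusion in fact holds for arbitrary sequences of worms $A_i \in \Worms_i$.
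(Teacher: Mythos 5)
Your proof is correct and reaches the same contradiction as the paper's, namely an appeal to G\"odel's second incompleteness theorem for $n$-provability. The technical route differs slightly: where you rebase the entire level-$n$ progression on the c.e.\ theory $U=\bigcup_{i<n}T_i^{A_i}$ (which requires lifting Lemma~\ref{theorem:GeneralizedTuringProgressionsInvariantToSmallChangesBaseTheory} from a single $\Sigma_{n+1}$ sentence to a c.e.\ $\Sigma_{n+1}$-axiomatized extension), the paper instead invokes compactness to replace the lower-level axioms by a single $\Pi_n$ sentence $\pi$ with $T_n^{A_n}+\pi\vdash\la n\ra_{T_n^{A_n}}\top$, and then uses provable $\Sigma_{n+1}$-completeness together with the formalized deduction theorem to get $T_n^{A_n}+\pi\vdash\la n\ra_{T_n^{A_n}+\pi}\top$. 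The lift you need is indeed unproblematic for the reason you give, but it is an extra lemma to state and verify, whereas the paper's compactness step stays within the single-sentence version already on record; on the other hand, your version sidesteps the minor imprecision in the paper's phrasing ``$T_n^{A_n}+\pi\vdash T_n^B$'' (literally false when $T_n^B$ is not finitely axiomatized over $T_n^{A_n}$, though only the single consequence $\la n\ra_{T_n^{A_n}}\top$ is ever used). Your remark that the Ignatiev condition on $\vec A$ plays no role is accurate and applies equally to the paper's argument; in both cases what is really being used is that the enriched level-$n$ progression is $n$-consistent, which is what the stated hypothesis ``$T$ is consistent'' together with the paper's standing soundness assumptions is meant to supply.
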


\begin{proof}
Suppose otherwise, that is $A_n< B$. Then by compactness, for a single sentence $\pi$ of complexity at most $\Pi_{n}$ we have that $T_n^{A_n} + \pi \vdash T_n^B$. Since $B>A_n$ we certainly have $T_n^{A_n} + \pi \vdash \la n\ra_{T_n^{A_n}}\top$ whence, by provable $\Sigma_{n+1}$ completeness also $\la n\ra_{T_n^{A_n}}\pi$. Since the latter is equivalent to $\la n\ra_{T_n^{A_n} + \pi}\top$ we get by G\"odel's second incompleteness theorem for $n$-provability that $T_n^{A_n} + \pi$ is inconsistent.
\end{proof}

\begin{theorem}
Let $\vec A \in \mathcal{I}_\omega$. We have that ${\sf tt}((\vec A)_{\sf tt}) = \vec A$.
\end{theorem}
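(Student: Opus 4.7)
The plan is to verify the two inequalities componentwise: for each $n$, show $|(\vec A)_{\sf tt}|_{\Pi^0_{n+1}} = A_n$ (read as ordinals via $o_n$), so that the Turing-Taylor expansion recovers exactly the Ignatiev sequence we started with.

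The lower bound $|(\vec A)_{\sf tt}|_{\Pi^0_{n+1}} \geq A_n$ is immediate from the definitions: the theory $(\ea^+)_n^{A_n}$ is by construction one of the components of the union $(\vec A)_{\sf tt} = \bigcup_{i=0}^{\infty}(\ea^+)_i^{A_i}$, so $(\ea^+)_n^{A_n}\subseteq (\vec A)_{\sf tt}$ and hence $A_n$ is in the set whose supremum defines $|(\vec A)_{\sf tt}|_{\Pi^0_{n+1}}$.

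For the upper bound the plan is as follows. Suppose, for a contradiction, that there is some worm $B \in \Worms_n$ with $B >_n A_n$ for which $(\ea^+)_n^B \subseteq (\vec A)_{\sf tt}$. Each axiom $\pi$ of $(\ea^+)_n^B$ is $\Pi_{n+1}$ (it is an iterated $n$-consistency statement), and by compactness $\pi$ is derivable from finitely many axioms of $(\vec A)_{\sf tt}$, hence from $\bigcup_{i=0}^{N_\pi}(\ea^+)_i^{A_i}$ for some $N_\pi \geq n$. The unlabeled lemma immediately preceding Lemma \ref{theorem:IgnatievSequencesStableUnderIgnatievOperator} gives
\[
\bigcup_{i=0}^{N_\pi}(\ea^+)_i^{A_i} \equiv_n \bigcup_{i=0}^{n}(\ea^+)_i^{A_i},
\]
and since $\pi \in \Pi_{n+1}$ this yields $\bigcup_{i=0}^{n}(\ea^+)_i^{A_i}\vdash \pi$. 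Because this holds for every axiom of $(\ea^+)_n^B$, we obtain $\bigcup_{i=0}^{n}(\ea^+)_i^{A_i} \vdash (\ea^+)_n^B$, and then Lemma \ref{theorem:IgnatievSequencesStableUnderIgnatievOperator} forces $B \leq_n A_n$, contradicting our choice of $B$. Therefore $|(\vec A)_{\sf tt}|_{\Pi^0_{n+1}} \leq A_n$.

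The main delicate point is the uniformization step: the compactness argument a priori produces a different bound $N_\pi$ for each axiom $\pi$, but the key observation is that the conservativity lemma collapses all these different finite unions to the same $\Pi_{n+1}$-theory $\bigcup_{i=0}^{n}(\ea^+)_i^{A_i}$, so the dependence on $N_\pi$ disappears and we can feed the combined derivation into Lemma \ref{theorem:IgnatievSequencesStableUnderIgnatievOperator}. Everything else amounts to invoking the previously established machinery.
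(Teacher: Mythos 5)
Your proof is correct and takes essentially the same route the paper intends: the paper's own proof is a one-liner (``This follows directly from the previous lemmas''), and you have supplied exactly the missing details by combining compactness of derivations with the unlabeled conservation lemma to collapse $\bigcup_{i=0}^{N_\pi}$ to $\bigcup_{i=0}^{n}$, then feeding the resulting derivation into Lemma \ref{theorem:IgnatievSequencesStableUnderIgnatievOperator}. Your observation about uniformizing the bounds $N_\pi$ via the conservation lemma is precisely the point that makes the argument go through.
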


\begin{proof}
We need to see that $|(\vec A)_{\sf tt}|_m = A_m$. This follows directly from the previous lemmas.
\end{proof}

\subsection{Ignatiev's model: A roadmap to conservation results}

Now that we know that the $\Pi_n$ consequences of formal sub-theories between $\ea^+$ and $\pa$ correspond to points in Ignatiev's model, this gives us a nice way of collecting all we know about the $\Pi_n^0$ consequences of these fragments of arithmetic into a picture: We write the corresponding theories $T$ next to the nodes in the model that correspond to the spectra $tt(T)$ of $T$. If we have two theories $T_1$ and $T_2$, we can readily read off the amount of $\Pi_{n+1}$ conservation between them: look at the largest $n$ such that $\big(tt(T_1)\big)_n = \big(tt(T_2)\big)_n$.

For example, it is known that $\isig{1} \equiv \la 2\ra_{\ea^+} \top$ which would correspond to the point $\la \omega^\omega, \omega, 1\ra$. Similarly, \PRA will correspond to $\la \omega^\omega, \omega, 0\ra$ (see \cite[Corollary 4.14]{Beklemishev:2005:Survey}\footnote{There is a minor detail in that this results is formulated over $\ea$ and we work over $\ea^+$. Basic observations from \cite{Schmerl:1978:FineStructure} show that these differences are inessential for our limit ordinals.}). Parson's Theorem is readily read off from the picture (see Figure \ref{fig:ignatiev}) since $tt(\PRA)_1 = tt(\isig{1})_1 = \omega$ so that $\PRA \equiv_{\Pi^0_2} \isig{1}$.

\begin{figure}
\begin{center}
\input{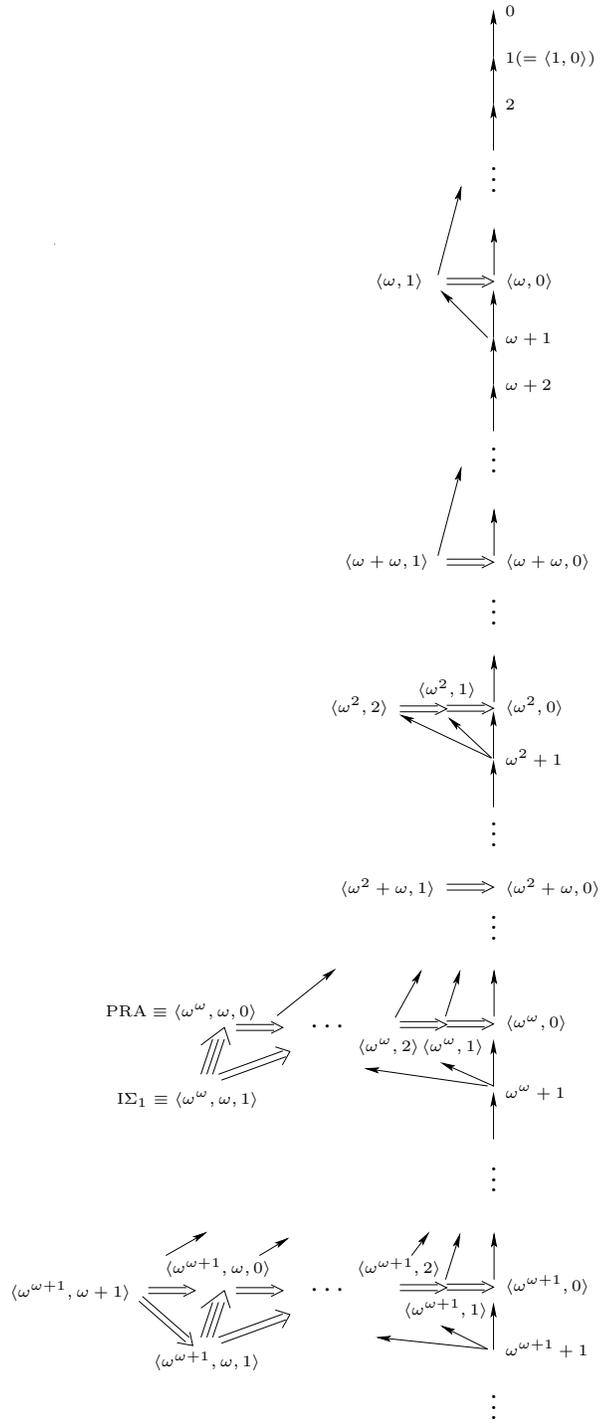}
\caption{Ignatiev's model: the $\succ_0$ relation is represented by a single arrow, $\succ_1$ by a double and $\succ_2$ by a triple arrow.} \label{fig:ignatiev}
\end{center}
\end{figure}

However, the picture does not tell us which theories proves what kind of consistency of which other theory. For example, although there is an arrow between $\isig{1}$ and \PRA, it is clear that \isig{1} proves no consistency of \PRA. In \cite{HermoJoosten:2015:TuringSchmerl} a model is presented where, apart from the conservation, one can directly see and compare the consistency strength of the Turing progressions depicted in that model.  We defer the further filling out of the Ignatiev model to a later paper.

\section*{Acknowledgement}
I am very grateful to Lev Beklemishev for teaching me much while being his student and for many fruitful discussions after that. For a long time Lev has advertised the analogy between Taylor expansions of well behaved functions on the one hand and characterizations of well-behaved formal theories in terms of reflection principles/consistency statements on the other hand. When I was still a PhD student I wondered what this could mean. The current paper is one further step towards my understanding of that question.

\bibliographystyle{plain}
\bibliography{References}

\end{document}